\documentclass[a4paper,12pt]{article}

\usepackage{latexsym}
\usepackage{amssymb}
\usepackage{theorem}
\usepackage{amsmath}
\usepackage{amscd}
\usepackage{graphicx}
\usepackage{color}

\newtheorem{theorem}{Theorem}[section]
\newtheorem{corollary}[theorem]{Corollary}
\newtheorem{lemma}[theorem]{Lemma}
\newtheorem{example}[theorem]{Example}
\newtheorem{proposition}[theorem]{Proposition}
\newtheorem{remark}[theorem]{Remark}
\newtheorem{definition}[theorem]{Definition}

\newcommand{\demo}{\par\noindent{\it Proof. \/}\ }
\newcommand{\enD}{\hfill $\Box$\vspace{3truemm} \par}
\newcommand{\R}{\mathbb{R}}

\newcommand{\ba}{\mbox{\boldmath $a$}}
\newcommand{\bb}{\mbox{\boldmath $b$}}

\newcommand{\A}{\mathcal{A}}

\begin{document}

\title{Curvature equivalence for Legendre curves in the unit tangent bundle over Euclidean plane}

\author{Nozomi Nakatsuyama, Masatomo Takahashi and Minoru Yamamoto}

\date{\today}

\maketitle

\begin{abstract}
The Legendre curve in the unit tangent bundle over Euclidean plane is a plane curve with a moving frame. 
We have the (Legendre) curvature of the Legendre curve, and the existence and uniqueness theorems for the curvature are valid. 
In this paper, we introduce an equivalence relation for Legendre curves called the curvature equivalence. 
We investigate properties of the curvature equivalence and give local and global classifications of Legendre curves under the curvature equivalence.
\end{abstract}

\renewcommand{\thefootnote}{\fnsymbol{footnote}}
\footnote[0]{2020 Mathematics Subject classification: 58K05, 53A04, 57R45}
\footnote[0]{Key Words and Phrases. Legendre curve, Legendre curvature, curvature equivalence, singular point}

\section{Introduction}

A lot of works on the classification of germs with $\mathcal{A}$-equivalence (diffeomorphisms of the source and target) have been done in the context of singularity theory. 
See \cite{Arnold1, Arnold2, Bruce-Giblin, Izumiya-book} for basic idea of singularity theory.
The $\mathcal{A}$-equivalent class of mappings or functions is worth studying from the viewpoint of differential topology. 
However, diffeomorphisms destroy geometry while preserving singularities.
In \cite{Salarinoghabi-Tari}, they investigated the geometry of deformations of singular plane
curves, which preserves inflection points, vertices and singular points of plane curves. 
In this paper, we consider another equivalence relation for smooth plane curves with singular points.
As smooth plane curves with singular points, we apply Legendre curves. 
A Legendre curve in the unit tangent bundle over Euclidean plane is a plane curve with a moving frame. 
Then we can define the (Legendre) curvature of the Legendre curve  (cf. \cite{Fukunaga-Takahashi-2013}). 
The curvature is a complete invariant for Legendre curves up to congruence (Euclidean motion) like as the curvature of a regular plane curve. 
The existence and uniqueness theorems for the curvature are valid (cf. \cite{Fukunaga-Takahashi-2013}). 
We introduce an equivalence relation for Legendre curves via its curvatures called the curvature equivalence. 
Properties of the curvature equivalence are investigated, for instance, an affine transformation of a plane curve is one example of the curvature equivalence. 
By definition, the curvature equivalence preserves the inflection points and singular points of curves but does not preserve vertices. 
As main results, we give local and global classifications of Legendre curves under the curvature equivalence (Theorems \ref{local-classifications} and \ref{global-classifications}).

We shall assume throughout the paper that all maps and manifolds are $C^{\infty}$ unless the contrary is explicitly stated.

\bigskip
\noindent
{\bf Acknowledgements}. 
The first author was supported by JST SPRING Grant Number JPMJSP2153.
The second author was partially supported by JSPS KAKENHI Grant Number JP 24K06728.

\section{Preliminaries}

We quickly review the theories of Legendre curves on the unit tangent bundle over $\R^2$.
\par
Let $I$ be an interval or $\R$, and let $\R^2$ be the Euclidean plane with the inner product $\ba \cdot \bb=a_1b_1+a_2b_2$, where $\ba=(a_1,a_2)$ and $\bb=(b_1,b_2) \in \R^2$. 
\par
Let $(\gamma,\nu): I \to \R^2 \times S^1$ be a smooth mapping, 
where  $S^1$ is the unit circle centered at the origin in $\R^{2}$. 
We say that $(\gamma,\nu):I \to \R^2 \times S^1$ is {\it a Legendre curve} if 
$(\gamma,\nu)^* \theta=0$, where $\theta$ is the canonical contact $1$-form on the unit tangent bundle $T_1 \R^2=\R^2 \times S^1$ (cf. \cite{Arnold1, Arnold2}). 
This condition is equivalent to $\dot{\gamma}(t) \cdot \nu(t)=0$ for all $t \in I$. 
Moreover, if $(\gamma,\nu)$ is an immersion, we call $(\gamma,\nu)$ {\it a Legendre immersion}.
We say that $\gamma: I \to \R^2$ is {\it a frontal} (respectively, {\it a front} or {\it a wave front}) if there exists a smooth mapping $\nu:I \to S^1$ such that 
$(\gamma,\nu)$ is a Legendre curve (respectively, a Legendre immersion).
\par
Let $(\gamma,\nu):I \to \R^2 \times S^1$ be a Legendre curve.
We put on $\mu (t)=J(\nu (t))$ and call the pair $\{\nu(t), \mu(t) \}$ a {\it moving frame along the frontal $\gamma(t)$} in $\R^2$. 
Here, $J$ represents the $\pi/2$ anticlockwise rotation around the origin in $\R^{2}$. 
Then, we have the Frenet type formula of the Legendre curve, which is given by
\begin{eqnarray*}\label{Frenet.frontal}
\left(
\begin{array}{c}
\dot{\nu}(t)\\
\dot{\mu}(t)
\end{array}
\right)
=
\left(
\begin{array}{cc}
0 & \ell(t)\\
-\ell(t) & 0
\end{array}
\right)
\left(
\begin{array}{c}
\nu(t)\\
\mu(t)
\end{array}
\right), \ 
\dot\gamma(t)=\beta(t) \mu(t),
\end{eqnarray*}
where $\ell(t)=\dot\nu(t) \cdot \mu(t), \beta(t)=\dot{\gamma}(t) \cdot \mu(t)$. 
The pair $(\ell, \beta)$ is an important invariant of Legendre curves. 
We call the pair $(\ell,\beta)$ the {\it (Legendre) curvature} of the Legendre curve $(\gamma,\nu)$ (cf. \cite{Fukunaga-Takahashi-2013}).
Then $(\gamma,\nu):I \to \R^2 \times S^1$ is a Legendre immersion if and only if  $(\ell(t),\beta(t)) \not=(0,0)$ for all $t \in I$. 
We say that a point $t_0 \in I$ is an {\it inflection point} of  the frontal $\gamma$ (or, the Legendre curve $(\gamma,\nu)$) if $\ell(t_0)=0$.
Remark that the definition of the inflection point of the frontal is a generalization of the definition 
of the inflection point of a regular plane curve (cf. \cite{Fukunaga-Takahashi-2013}). 
Moreover, $t_0$ is a singular point of $\gamma$ if and only if $\beta(t_0)=0$.
\par
We have the existence and the uniqueness for Legendre curves in the unit tangent bundle like as regular plane curves, see \cite{Fukunaga-Takahashi-2013}.
\begin{theorem}[The Existence Theorem for Legendre curves \cite{Fukunaga-Takahashi-2013}]\label{existence.Legendre}
Let $(\ell,\beta):I \to \R^2$ be a smooth mapping. 
There exists a Legendre curve $(\gamma,\nu):I \to \R^2 \times S^1$ whose associated curvature of the Legendre curve is $(\ell, \beta)$.
\end{theorem}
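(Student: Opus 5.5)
The plan is to build $(\gamma,\nu)$ by explicit integration, the same way one recovers a regular plane curve from its curvature. Fix a base point $t_0 \in I$ and define the angle function
\[
\theta(t)=\int_{t_0}^{t} \ell(s)\,ds ,
\]
which is smooth on $I$ because $\ell$ is smooth. Set
\[
\nu(t)=(\cos\theta(t),\sin\theta(t)) \in S^1, \qquad \mu(t)=J(\nu(t))=(-\sin\theta(t),\cos\theta(t)).
\]
Then define
\[
\gamma(t)=\int_{t_0}^{t} \beta(s)\mu(s)\,ds .
\]
Both maps are smooth, so $(\gamma,\nu):I\to\R^2\times S^1$ is a smooth mapping.

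Next I would verify the required properties directly.
\begin{itemize}
\item \emph{Legendre condition.} We have $\dot\gamma(t)=\beta(t)\mu(t)$, and $\mu(t)\cdot\nu(t)=0$. Hence $\dot\gamma(t)\cdot\nu(t)=0$ for all $t$, so $(\gamma,\nu)$ is a Legendre curve.
\item \emph{First curvature component.} Differentiating gives $\dot\nu(t)=\dot\theta(t)(-\sin\theta,\cos\theta)=\ell(t)\mu(t)$. Therefore $\dot\nu(t)\cdot\mu(t)=\ell(t)$, using $\mu\cdot\mu=1$.
\item \emph{Second curvature component.} Since $\dot\gamma=\beta\mu$, we get $\dot\gamma(t)\cdot\mu(t)=\beta(t)$.
\end{itemize}
So the curvature of $(\gamma,\nu)$ in the sense of the Frenet type formula is exactly $(\ell,\beta)$. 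The remaining frame equation $\dot\mu=-\ell\nu$ also holds automatically, since $\dot\mu=J\dot\nu=\ell J\mu=-\ell\nu$.

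There is essentially no serious obstacle. The one point that needs care is choosing $\nu$ so that it is globally smooth with values in $S^1$ while having prescribed angular speed $\ell$. Parametrizing by the integrated angle $\theta$ handles this on all of $I$ with no lifting issue, because $I$ is an interval and integration from a fixed base point is well defined there. If $I=\R$ or $I$ is open, the integrals from $t_0$ still make sense, since the integrands are continuous on compact subintervals.

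One could also phrase the construction as solving the linear ODE system $\dot\nu=\ell\mu$, $\dot\mu=-\ell\nu$, $\dot\gamma=\beta\mu$ with an orthonormal initial frame. Global existence on $I$ follows from linearity, and orthonormality is preserved because the coefficient matrix is skew-symmetric. The explicit angle formula above makes this unnecessary.
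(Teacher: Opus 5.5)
Your proposal is correct and is the same construction the paper records right after the statement: $\nu=(\cos\int\ell,\sin\int\ell)$ and $\gamma=\bigl(-\int\beta\sin(\int\ell),\int\beta\cos(\int\ell)\bigr)$, which is your $\gamma=\int_{t_0}^{t}\beta\mu$ with $\theta=\int_{t_0}^{t}\ell$. Your checks that $\dot\gamma\cdot\nu=0$, $\dot\nu\cdot\mu=\ell$ and $\dot\gamma\cdot\mu=\beta$ are exactly what is needed.
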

\par
In fact, the Legendre curve, whose associated curvature of the Legendre curve is $(\ell,\beta)$, is given by the form
\begin{align*}
\gamma(t)&=\left(-\int \beta(t) \sin \left( \int \ell(t) dt \right) dt, \  \int \beta(t) \cos \left( \int \ell(t) dt \right) dt\right),\\
\nu(t)&=\left(\cos \left(\int \ell(t) dt\right), \ \sin \left(\int \ell(t) dt\right)\right).
\end{align*}
\begin{definition}\label{congruent}{\rm
Let $(\gamma,\nu)$ and $(\widetilde{\gamma},\widetilde{\nu}):I \to \R^2 \times S^1$ be Legendre curves.
We say that $(\gamma,\nu)$ and $(\widetilde{\gamma},\widetilde{\nu})$ are {\it congruent as Legendre curves} if there exist a constant rotation $A \in SO(2)$ and a translation $\ba$ on $\R^2$ such that $\widetilde{\gamma}(t)=A(\gamma(t))+\ba$ and $\widetilde{\nu}(t)=A (\nu(t))$ for all $t \in I$. 
}
\end{definition}
\begin{theorem}[The Uniqueness Theorem for Legendre curves \cite{Fukunaga-Takahashi-2013}]\label{uniqueness.Legendre}
Let $(\gamma,\nu)$ and $(\widetilde{\gamma},\widetilde{\nu}):I \to \R^2 \times S^1$ be Legendre curves with curvatures of Legendre curves $(\ell,\beta)$ and $(\widetilde{\ell},\widetilde{\beta})$.
Then $(\gamma,\nu)$ and $(\widetilde{\gamma},\widetilde{\nu})$ are congruent as Legendre curves if and only if $(\ell,\beta)$ and $(\widetilde{\ell},\widetilde{\beta})$ coincide.
\end{theorem}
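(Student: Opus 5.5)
The plan is to prove the two implications separately, working throughout with the Frenet type formula and the definitions $\ell=\dot\nu\cdot\mu$ and $\beta=\dot\gamma\cdot\mu$.

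\emph{Congruence implies equal curvatures.} Suppose $\widetilde{\gamma}=A\gamma+\ba$ and $\widetilde{\nu}=A\nu$ with $A\in SO(2)$. Since $A$ is a rotation, it commutes with $J$, so $\widetilde{\mu}=J(A\nu)=A\mu$. Differentiating gives $\dot{\widetilde{\nu}}=A\dot\nu$ and $\dot{\widetilde{\gamma}}=A\dot\gamma$. Because $A$ preserves the inner product,
\[
\widetilde{\ell}=A\dot\nu\cdot A\mu=\ell, \qquad \widetilde{\beta}=A\dot\gamma\cdot A\mu=\beta .
\]

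\emph{Equal curvatures imply congruence.} Assume $(\ell,\beta)=(\widetilde{\ell},\widetilde{\beta})$. Fix $t_0\in I$. There is a unique $A\in SO(2)$ with $A\nu(t_0)=\widetilde{\nu}(t_0)$, because $SO(2)$ acts simply transitively on $S^1$. Since $A$ commutes with $J$, we also get $A\mu(t_0)=\widetilde{\mu}(t_0)$. Define $\overline{\nu}=A\nu$ and $\overline{\mu}=A\mu$. By the first part, the pair $(\overline{\nu},\overline{\mu})$ satisfies the same Frenet system as $(\widetilde{\nu},\widetilde{\mu})$, with the same coefficient $\ell$, and it has the same initial value at $t_0$.

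The linear ODE $\dot X=\ell(t)JX$ has unique solutions for given initial data, so $A\nu=\widetilde{\nu}$ on all of $I$. An alternative is a Lyapunov-type argument: the quantity $\overline{\nu}\cdot\widetilde{\nu}+\overline{\mu}\cdot\widetilde{\mu}$ has zero derivative by the skew-symmetry of the Frenet matrix, and it equals $2$ at $t_0$, which forces $\overline{\nu}=\widetilde{\nu}$ and $\overline{\mu}=\widetilde{\mu}$.

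For the curves themselves, compute
\[
\frac{d}{dt}\bigl(\widetilde{\gamma}-A\gamma\bigr)=\widetilde{\beta}\,\widetilde{\mu}-\beta A\mu=0 .
\]
Since $I$ is connected, $\widetilde{\gamma}-A\gamma$ is a constant vector $\ba$. This gives exactly the congruence required in Definition~\ref{congruent}.

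The only delicate point is the ODE uniqueness step for the frame. It is routine, either via the Picard--Lindel\"of theorem or via the conserved inner product argument. I would present the latter, since it avoids citing general ODE theory.
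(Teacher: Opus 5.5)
Your proof is correct and complete. The forward direction follows because $A$ commutes with $J$ and preserves the inner product. For the converse, you align the frames at $t_0$ by a rotation and propagate this equality along $I$, using either the conserved quantity $\overline{\nu}\cdot\widetilde{\nu}+\overline{\mu}\cdot\widetilde{\mu}$ or uniqueness for the linear equation $\dot X=\ell JX$, which holds on all of $I$. Integrating $\frac{d}{dt}(\widetilde{\gamma}-A\gamma)=\beta(\widetilde{\mu}-\overline{\mu})=0$ over the connected interval $I$ then finishes the argument.

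The paper does not prove this theorem itself; it only quotes it from \cite{Fukunaga-Takahashi-2013}. Your frame-uniqueness argument is the standard proof of this result, so there is no alternative route in the paper to compare against.
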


\begin{example}[Type $(n,m)$ \cite{Nakatsuyama-Takahashi-2024}]\label{nm-type}
{\rm 
Let $n, k \in \mathbb{N}$ and $m=n+k$. 
We consider a smooth map germ $(\gamma,\nu):(\R,0) \to \R^2 \times S^1$,
\begin{align*}
\gamma(t) =(\pm t^n,t^mf(t)),\ 
\nu(t) =\frac{1}{\sqrt{(mt^kf(t)+t^{k+1}\dot{f}(t))^2+n^2}}(-mt^kf(t)-t^{k+1}\dot{f}(t),\pm n),
\end{align*}
where $f:(\R,0) \to \R$ is a smooth function germ with $f(0) \not=0$. 
Note that $0$ is a singular point of $\gamma$ when $n>1$. 
Then $(\gamma,\nu)$ is a Legendre curve with curvature 
\begin{align*}
\ell(t)&=\pm \frac{nt^{k-1}(mkf(t)+(m+k+1)t\dot{f}(t)+t^{2}\ddot{f}(t))}{(mt^kf(t)+t^{k+1}\dot{f}(t))^2+n^2},\\ \beta(t)&=-t^{n-1}\sqrt{(mt^kf(t)+t^{k+1}\dot{f}(t))^2+n^2}.
\end{align*}
We say $\gamma$ is of type $(n,m)$. 
If $n=1$, then $\gamma$ is regular (a front) and $(\gamma,\nu)$ is a Legendre immersion.
If $k=1$, then $\gamma$ is a front and $(\gamma,\nu)$ is a Legendre immersion.
Otherwise, that is, if $n,k>1$, then $\gamma$ is a frontal and $(\gamma,\nu)$ is a Legendre curve.
}
\end{example}

\begin{proposition}[\cite{Fukunaga-Takahashi-2013}]\label{Legendre.function}
Let $(\gamma,\nu):I \to \R^2 \times S^1$ be a Legendre curve with curvature $(\ell,\beta)$ and $t:\widetilde{I} \to I$ be a smooth function, where $\widetilde{I}$ is an interval. Then $(\gamma \circ t, \nu \circ t): \widetilde{I} \to \R^2 \times S^1$ is also a Legendre curve with the curvature $((\ell \circ t)t', (\beta \circ t)t')$.
\end{proposition}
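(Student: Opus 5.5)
We verify the Legendre condition for the reparametrized curve and then compute its curvature directly from the definitions $\ell=\dot\nu\cdot\mu$ and $\beta=\dot\gamma\cdot\mu$. Write $\widetilde{\gamma}=\gamma\circ t$ and $\widetilde{\nu}=\nu\circ t$ on $\widetilde{I}$, and denote by $s$ the parameter on $\widetilde{I}$. Both maps are smooth as compositions of smooth maps, and $\widetilde{\nu}$ takes values in $S^1$.

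The first step is the chain rule:
\[
\widetilde{\gamma}'(s)=\dot\gamma(t(s))\,t'(s), \qquad \widetilde{\nu}'(s)=\dot\nu(t(s))\,t'(s).
\]
Since $\dot\gamma(t)\cdot\nu(t)=0$ for all $t\in I$, we get
\[
\widetilde{\gamma}'(s)\cdot\widetilde{\nu}(s)=t'(s)\,\dot\gamma(t(s))\cdot\nu(t(s))=0
\]
for all $s$. Hence $(\widetilde{\gamma},\widetilde{\nu})$ is a Legendre curve. Note that this uses no assumption that $t$ is a diffeomorphism; $t'$ may vanish.

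The second step identifies the moving frame. Because $J$ is a fixed linear map, $\widetilde{\mu}=J(\widetilde{\nu})=J(\nu\circ t)=\mu\circ t$. The curvature of $(\widetilde{\gamma},\widetilde{\nu})$ is then
\begin{align*}
\widetilde{\ell}(s)&=\widetilde{\nu}'(s)\cdot\widetilde{\mu}(s)=t'(s)\,\dot\nu(t(s))\cdot\mu(t(s))=\ell(t(s))\,t'(s),\\
\widetilde{\beta}(s)&=\widetilde{\gamma}'(s)\cdot\widetilde{\mu}(s)=t'(s)\,\dot\gamma(t(s))\cdot\mu(t(s))=\beta(t(s))\,t'(s).
\end{align*}
This gives the curvature $((\ell\circ t)t',(\beta\circ t)t')$, as claimed.

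There is no real obstacle. The only point needing care is to use the defining formulas $\ell=\dot\nu\cdot\mu$ and $\beta=\dot\gamma\cdot\mu$ rather than anything that assumes regularity of $t$, so the argument holds even where $t'$ vanishes.
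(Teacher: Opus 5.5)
Your proof is correct. The paper gives no proof of this proposition and simply cites \cite{Fukunaga-Takahashi-2013}; your argument is the standard direct calculation behind that result: the chain rule gives the Legendre condition, you note $\widetilde{\mu}=\mu\circ t$, and you apply the defining formulas $\ell=\dot\nu\cdot\mu$ and $\beta=\dot\gamma\cdot\mu$. Your remark that nothing requires $t'\neq 0$ is also accurate, since the statement allows an arbitrary smooth $t$.
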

We say that 
$t:\widetilde{I} \to I$ 
is a parameter change if $t'(u) \not=0$ for all $u \in \widetilde{I}$. 
\par
Two map germs 
$f:(\R^{n},x_{0})\to (\R^{p},y_{0})$ 
and $g:(\R^{n},x_{1})\to (\R^{p},y_{1})$ 
are said to be $\A$-equivalent
if there exist diffeomorphism germs 
$\phi:(\R^{n},x_{0})\to (\R^{n},x_{1})$ 
and
$\Phi \colon (\R^p,y_{1})\to(\R^p,y_{0})$ 
such that 
$f=\Phi \circ g\circ \phi$.
When $\Phi$ is the identity in the above,
$f$ and $g$ are said to be $\mathcal{R}$-equivalent. 
\par
A one-variable function $f:(I,t_0) \to (\R,0)$ has type $A_k$ at $t_0 \in I$ if $f^{(i)}(t_0)=0$ 
for $i=0,\dots,k$ and $f^{(k+1)}(t_0) \not=0$. 
Then $f$ is $\mathcal{R}$-equivalent to 
$g:(\R,0)\ni t\mapsto \pm t^{k+1}\in(\R,0)$
 (cf. \cite{Bruce-Giblin}).
\par
In this paper, if $f:(I,t_0) \to (\R,0)$ has type $A_k$ at $t_0 \in I$, we also say the {\it contact order of $f$ at $t_0$} is $k+1$ and denote it by ord$(f; t_0)=k+1$ (cf. \cite{Bruce-Giblin}).

\begin{proposition}[\cite{Nakatsuyama-Takahashi-2024}]\label{nm-type-parametrization}
Let $\gamma:(I,t_0) \to (\R^2,0)$ be a smooth map germ with $\gamma(t)=(x(t),y(t))$ and $1 \le n < m$. 
Suppose that $x$ has type $A_{n-1}$ and $y$ has type $A_{m-1}$ at $t_0$. 
Then $\gamma$ is $\mathcal{R}$-equivalent to of type $(n,m)$.
\end{proposition}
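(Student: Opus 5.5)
The plan is to reduce to $t_0=0$ and then reparametrize so that the first component becomes exactly $\pm t^n$. The second component keeps its contact order under this reparametrization, which gives the form $t^m f(t)$ of type $(n,m)$ in Example \ref{nm-type} with $k=m-n\ge 1$.

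First, translate the parameter by $t\mapsto t+t_0$. This is an $\mathcal{R}$-equivalence, so we may assume $t_0=0$.

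Since $x$ has type $A_{n-1}$ at $0$, we have $x(0)=\dots=x^{(n-1)}(0)=0$ and $x^{(n)}(0)\neq 0$. Applying Hadamard's lemma $n$ times gives $x(t)=t^n g(t)$ with $g$ smooth and $g(0)=x^{(n)}(0)/n!\neq 0$. Likewise, $y(t)=t^m h(t)$ with $h$ smooth and $h(0)\neq 0$.

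Shrinking the interval, we may assume $g$ has constant sign $\varepsilon=\mathrm{sign}\, g(0)$. Then $|g|^{1/n}$ is a smooth positive function, because $g$ does not vanish. Define
\[
s=\phi(t)=t\,|g(t)|^{1/n}.
\]
Then $\phi(0)=0$ and $\phi'(0)=|g(0)|^{1/n}>0$. By the inverse function theorem, $\phi$ is a diffeomorphism germ $(\R,0)\to(\R,0)$, and its inverse has the form $t=s\,\psi(s)$ with $\psi$ smooth and $\psi(0)=|g(0)|^{-1/n}\neq 0$; this again follows from Hadamard's lemma applied to $\phi^{-1}$.

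In the new parameter we get $x(t(s))=\varepsilon\, s^n$. For the second component,
\[
y(t(s))=s^m\,\psi(s)^m\,h(s\psi(s)).
\]
So we set $f(s)=\psi(s)^m h(s\psi(s))$, which is smooth with $f(0)=\psi(0)^m h(0)\neq 0$.

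Hence $\gamma\circ\phi^{-1}(s)=(\pm s^n, s^m f(s))$ with $f(0)\neq 0$. Since $m=n+k$ with $k=m-n\ge 1$, this is exactly type $(n,m)$ in the sense of Example \ref{nm-type}.

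The only delicate point is the smoothness of the $n$-th root defining the reparametrization. This is handled by noting that $g(0)\neq 0$, so $|g|^{1/n}$ is smooth near $0$. The sign $\pm$ is then forced to be $\mathrm{sign}\,x^{(n)}(0)$, because a reparametrization with $\phi'>0$ cannot change it. Everything else is routine use of Hadamard's lemma and the inverse function theorem.
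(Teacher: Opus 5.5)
Your proof is correct. Writing $x=t^ng$ and $y=t^mh$, reparametrizing by $s=t|g(t)|^{1/n}$ so that the first component becomes $\pm s^n$, and then factoring $s^m$ out of $y(t(s))$ is essentially the standard argument for this proposition, which the paper quotes from \cite{Nakatsuyama-Takahashi-2024} without reproving it. The one difference from the route suggested by the paper's preliminaries is that you build explicitly the diffeomorphism realizing the $\mathcal{R}$-equivalence of the $A_{n-1}$ function $x$ with $\pm t^n$, rather than citing that normal form from \cite{Bruce-Giblin}.
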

\par
We consider how to change the curvature of Legendre curves by a diffeomorphism of the target.
\begin{proposition}[\cite{Nakatsuyama-Takahashi-2024}]\label{diffeomorphism-target}
Let $(\gamma,\nu):I \to \R^2 \times S^1$ be a Legendre curve with curvature $(\ell,\beta)$ and 
$\Phi:\R^2 \to \R^2, \Phi(x,y)=(\phi_1(x,y),\phi_2(x,y))$ be a diffeomorphism. 
We denote $\gamma(t)=(x(t),y(t)), \nu(t)=(a(t),b(t))$ and $\widetilde{\gamma}=\Phi \circ \gamma$.
Then $(\widetilde{\gamma}, \widetilde{\nu}):I \to \R^2 \times S^1$ is a Legendre curve, where $\widetilde{\nu}=\overline{\nu}/|\overline \nu|$,
$$
\overline{\nu}(t)=(\phi_{2y}(\gamma(t))a(t)-\phi_{2x}(\gamma(t))b(t),-\phi_{1y}(\gamma(t))a(t)+\phi_{1x}(\gamma(t))b(t)).
$$
The curvature $(\widetilde{\ell},\widetilde{\beta})$ is given by 
\begin{align*} 
\widetilde{\ell}(t) &= \frac{1}{|\overline{\nu}(t)|^2}\Bigl( \bigl((\phi_{2xx}(\gamma(t))b^2(t)-2\phi_{2xy}(\gamma(t))a(t)b(t)+\phi_{2yy}(\gamma(t))a^2(t))\\
& \qquad (-\phi_{1x}(\gamma(t))b(t)+\phi_{1y}(\gamma(t))a(t))\\
& \quad -(\phi_{1xx}(\gamma(t))b^2(t)-2\phi_{1xy}(\gamma(t))a(t)b(t)+\phi_{1yy}(\gamma(t))a^2(t))\\
& \qquad (-\phi_{2x}(\gamma(t))b(t)+\phi_{2y}(\gamma(t))a(t)) \bigr)\beta(t)\\
& \quad +(\phi_{1x}(\gamma(t))\phi_{2y}(\gamma(t))-\phi_{2x}(\gamma(t))\phi_{1y}(\gamma(t)))\ell(t) \Bigr),\\
\widetilde{\beta}(t)&=|\overline{\nu}(t)|\beta(t).
\end{align*}
\end{proposition}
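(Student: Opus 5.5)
The plan is a direct computation in the moving frame. No earlier theorem is needed beyond the Frenet type formula. Write $\widetilde{\gamma}=\Phi\circ\gamma$ and differentiate with the chain rule:
$$\dot{\widetilde{\gamma}}(t)=D\Phi(\gamma(t))\dot\gamma(t)=\beta(t)D\Phi(\gamma(t))\mu(t).$$
Since $\mu=J\nu=(-b,a)$, this gives
$$\dot{\widetilde\gamma}=\beta\,(-\phi_{1x}b+\phi_{1y}a,\;-\phi_{2x}b+\phi_{2y}a).$$
A direct check shows that $\overline{\nu}$ is orthogonal to this vector, so $\overline\nu\cdot\dot{\widetilde\gamma}=0$. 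Moreover
$$J\overline\nu=(\phi_{1y}a-\phi_{1x}b,\;\phi_{2y}a-\phi_{2x}b)=D\Phi\,\mu,$$
which is nonzero because $D\Phi$ is invertible and $\mu\neq0$. Hence $|\overline\nu|\neq0$, $\widetilde\nu=\overline\nu/|\overline\nu|$ is smooth, and $(\widetilde\gamma,\widetilde\nu)$ is a Legendre curve.

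Next, set $\widetilde\mu=J\widetilde\nu=D\Phi\,\mu/|\overline\nu|$. Then
$$\widetilde\beta=\dot{\widetilde\gamma}\cdot\widetilde\mu=\beta\,|D\Phi\,\mu|^2/|\overline\nu|=|\overline\nu|\beta,$$
using $|D\Phi\,\mu|=|J\overline\nu|=|\overline\nu|$.

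For $\widetilde\ell=\dot{\widetilde\nu}\cdot\widetilde\mu$, note that the derivative of the normalizing factor $1/|\overline\nu|$ only produces a multiple of $\overline\nu$, which is orthogonal to $\widetilde\mu$. Hence
$$\widetilde\ell=\frac{\dot{\overline\nu}\cdot J\overline\nu}{|\overline\nu|^2}.$$
To compute $\dot{\overline\nu}$, I will use two facts. First, $\dot\gamma=\beta\mu$, so the $t$-derivative of $\phi_{ix}(\gamma(t))$ is $\beta(-\phi_{ixx}b+\phi_{ixy}a)$, and similarly for $\phi_{iy}$. Second, the Frenet formula $\dot\nu=\ell\mu$ gives $\dot a=-\ell b$ and $\dot b=\ell a$.

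This splits $\dot{\overline\nu}$ into two parts:
\begin{itemize}
\item a $\beta$-part involving the second derivatives of $\phi_1,\phi_2$;
\item an $\ell$-part equal to $\ell\,(-\phi_{2y}b-\phi_{2x}a,\;\phi_{1y}b+\phi_{1x}a)$.
\end{itemize}

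Dotting the $\ell$-part with $J\overline\nu=(\phi_{1y}a-\phi_{1x}b,\;\phi_{2y}a-\phi_{2x}b)$ should collapse, after using $a^2+b^2=1$, to $(\phi_{1x}\phi_{2y}-\phi_{2x}\phi_{1y})\ell$. For the $\beta$-part, the first component of $\dot{\overline\nu}$ is
$$\beta\bigl(-\phi_{2xy}b+\phi_{2yy}a\bigr)a-\beta\bigl(-\phi_{2xx}b+\phi_{2xy}a\bigr)b=\beta\,(\phi_{2xx}b^2-2\phi_{2xy}ab+\phi_{2yy}a^2),$$
and symmetrically the second component is $-\beta\,(\phi_{1xx}b^2-2\phi_{1xy}ab+\phi_{1yy}a^2)$. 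Dotting these with the components of $J\overline\nu$ gives exactly the stated Hessian terms.

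The main obstacle is the bookkeeping in this last step: verifying that the $\ell$-part simplifies cleanly to the Jacobian determinant, and keeping the signs consistent in the $\beta$-part. Beyond that, every step is routine chain-rule computation.
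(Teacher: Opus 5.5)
Your proposal is correct and is the same direct chain-rule computation in the moving frame that underlies this result; the paper does not prove it itself but quotes it from \cite{Nakatsuyama-Takahashi-2024}. The step you left as ``should collapse'' does go through: in the $\ell$-part the $ab$ cross terms cancel in pairs, and $a^2+b^2=1$ leaves exactly $(\phi_{1x}\phi_{2y}-\phi_{2x}\phi_{1y})\ell$. Conceptually, $\overline{\nu}=\det(D\Phi)(D\Phi)^{-T}\nu$, so the $\ell$-part is $\ell\det(D\Phi)(D\Phi)^{-T}\mu$, and its dot product with $D\Phi\,\mu$ is $\ell\det D\Phi$.
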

\par
As special cases, we consider affine transformations and the reflection of the target of $\gamma$.
\begin{corollary}[\cite{Nakatsuyama-Takahashi-2024}]\label{change-target}
Let $(\gamma,\nu):I \to \R^2 \times S^1$ be a Legendre curve with curvature $(\ell,\beta)$ and $\Phi:\R^2 \to \R^2$ be a diffeomorphism. 
We denote $\nu=(a,b)$ and $\widetilde{\gamma}=\Phi \circ \gamma$.
\par
$(1)$ Suppose that the diffeomorphism $\Phi:\R^2 \to \R^2$ is given by  $\Phi(x,y)=(a_{11}x+a_{12}y,a_{21}x+a_{22}y)$, where $a_{11}a_{22}-a_{12}a_{21} \not=0$ and $a_{11},a_{12},a_{21},a_{22} \in \R$. 
Then $(\widetilde{\gamma},\widetilde{\nu}):I \to \R^2 \times S^1$ is a Legendre curve with the curvature $(\widetilde{\ell},\widetilde{\beta})=\left((a_{11}a_{22}-a_{12}a_{21})\ell/|\overline{\nu}|^2,|\overline{\nu}|\beta\right)$, where $\widetilde{\nu}=\overline{\nu}/|\overline{\nu}|$ and 
$\overline{\nu}=(a_{22}a-a_{21}b,-a_{12}a+a_{11}b)$.
\par
$(2)$ Suppose that the diffeomorphism $\Phi: \R^2 \to \R^2$ is given by $\Phi(x,y)=(y,x)$.  
Then $(\widetilde{\gamma},\widetilde{\nu}):I \to \R^2 \times S^1$ is a Legendre curve with the curvature $(\widetilde{\ell},\widetilde{\beta})=\left(-\ell,\beta\right)$, where $\widetilde{\nu}=(-b,-a)$.
\end{corollary}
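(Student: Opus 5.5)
The statement is Corollary \ref{change-target}, so I will specialise Proposition \ref{diffeomorphism-target} to the two given maps $\Phi$. The plan is to compute the first and second partial derivatives of $\phi_1,\phi_2$, substitute them into the formulas for $\overline{\nu}$, $\widetilde{\ell}$ and $\widetilde{\beta}$, and simplify using $\nu=(a,b)$ with $a^2+b^2=1$.

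For $(1)$, we have $\phi_1=a_{11}x+a_{12}y$ and $\phi_2=a_{21}x+a_{22}y$, so $\phi_{1x}=a_{11}$, $\phi_{1y}=a_{12}$, $\phi_{2x}=a_{21}$ and $\phi_{2y}=a_{22}$. All second derivatives vanish. The formula of Proposition \ref{diffeomorphism-target} then gives directly
\[
\overline{\nu}=(a_{22}a-a_{21}b,\,-a_{12}a+a_{11}b).
\]
This vector never vanishes, because it is the image of $(a,b)\neq 0$ under a matrix whose determinant is $a_{11}a_{22}-a_{12}a_{21}\neq0$; hence $\widetilde{\nu}=\overline{\nu}/|\overline{\nu}|$ is well defined. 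In the expression for $\widetilde{\ell}$, every term multiplied by $\beta$ contains a second derivative of $\phi_1$ or $\phi_2$, so all of them vanish. What remains is $(a_{11}a_{22}-a_{21}a_{12})\ell/|\overline{\nu}|^2$. The relation $\widetilde{\beta}=|\overline{\nu}|\beta$ is quoted verbatim from the proposition.

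For $(2)$, we have $\phi_1=y$ and $\phi_2=x$, so $\phi_{1x}=0$, $\phi_{1y}=1$, $\phi_{2x}=1$ and $\phi_{2y}=0$. Again all second derivatives vanish. Substituting gives $\overline{\nu}=(-b,-a)$, whose norm is $1$, so $\widetilde{\nu}=(-b,-a)$. The Jacobian determinant is $0\cdot0-1\cdot1=-1$, which gives $\widetilde{\ell}=-\ell$ and $\widetilde{\beta}=\beta$.

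As a cross-check independent of the proposition, I would differentiate directly. For $\widetilde{\gamma}=(y,x)$ we get $\dot{\widetilde{\gamma}}\cdot\widetilde{\nu}=-\dot y b-\dot x a=-\dot\gamma\cdot\nu=0$, so the curve is Legendre. Next, $\widetilde{\mu}=J\widetilde{\nu}=(a,-b)$, and then $\dot{\widetilde{\nu}}\cdot\widetilde{\mu}=-\dot b a+\dot a b=-\ell$ and $\dot{\widetilde{\gamma}}\cdot\widetilde{\mu}=\dot y a-\dot x b=\beta$, matching the values above.

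There is no genuine obstacle here; the only point requiring care is keeping the signs and the index placement of $\overline{\nu}$ and of the determinant consistent.
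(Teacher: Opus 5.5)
Your proposal is correct and follows the paper's route: the paper gives no separate argument and presents this corollary as the special case of Proposition~\ref{diffeomorphism-target} obtained by inserting the constant first derivatives and vanishing second derivatives of the linear map and of the swap $(x,y)\mapsto(y,x)$, which is exactly your substitution. Your additions are accurate: the nonvanishing of $\overline{\nu}$, justified by the determinant, and the direct check of $(2)$ using $\widetilde{\mu}=J\widetilde{\nu}=(a,-b)$.
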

By a direct calculation, we have the following.
\begin{proposition}[\cite{Fukunaga-Takahashi-2013}]\label{nu-change}
Let $(\gamma,\nu):I \to \R^2 \times S^1$ be a Legendre curve with curvature $(\ell,\beta)$. 
Then $(\gamma,-\nu):I \to \R^2 \times S^1$ is also a Legendre curve with curvature $(\ell,-\beta)$.
\end{proposition}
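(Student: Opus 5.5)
We need to prove Proposition \ref{nu-change}: if $(\gamma,\nu)$ is a Legendre curve with curvature $(\ell,\beta)$, then $(\gamma,-\nu)$ is a Legendre curve with curvature $(\ell,-\beta)$. The approach is a direct check from the definitions $\ell=\dot\nu\cdot\mu$, $\beta=\dot\gamma\cdot\mu$ and $\mu=J(\nu)$.

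First, $(\gamma,-\nu)$ is a Legendre curve. Since $-\nu(t)\in S^1$ is smooth and
\[
\dot\gamma(t)\cdot(-\nu(t))=-\dot\gamma(t)\cdot\nu(t)=0
\]
for all $t\in I$, the Legendre condition holds.

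Next, compute the new frame. Put $\widetilde\nu=-\nu$. Because $J$ is linear, $\widetilde\mu=J(\widetilde\nu)=-J(\nu)=-\mu$.

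The curvature then follows from the defining formulas:
\[
\widetilde\ell=\dot{\widetilde\nu}\cdot\widetilde\mu=(-\dot\nu)\cdot(-\mu)=\dot\nu\cdot\mu=\ell,
\qquad
\widetilde\beta=\dot\gamma\cdot\widetilde\mu=-\dot\gamma\cdot\mu=-\beta.
\]
As a consistency check, the Frenet type formula $\dot\gamma=\beta\mu=(-\beta)(-\mu)$ is unchanged.

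No step presents a real obstacle. The only point requiring care is remembering that $\mu$ also flips sign, which is what keeps $\ell$ unchanged.
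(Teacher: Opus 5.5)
Your proof is correct and is the same direct calculation the paper relies on (it states the result ``by a direct calculation,'' citing Fukunaga--Takahashi). The Legendre condition is preserved, linearity of $J$ gives $\widetilde\mu=-\mu$, and substituting into $\ell=\dot\nu\cdot\mu$ and $\beta=\dot\gamma\cdot\mu$ yields the curvature $(\ell,-\beta)$.
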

\par
For $n \in \mathbb{N} \cup \{0\}$, we say that a Legendre curve $(\gamma, \nu) : [a,b] \rightarrow \mathbb{R}^2 \times S^1$ is $C^{n}$-\emph{closed} if $(\gamma^{(k)}(a), \nu^{(k)}(a))=(\gamma^{(k)}(b), \nu^{(k)}(b))$ for all $k=0,\cdots,n$, where $\gamma^{(k)}(a)$, $\nu^{(k)}(a)$,  $\gamma^{(k)}(b)$ and $\nu^{(k)}(b)$ mean one-sided $k$-th differential.
We say that  a Legendre curve $(\gamma, \nu) : [a,b]
\rightarrow \mathbb{R}^2 \times S^1$ is 
$C^{\infty}$-\emph{closed} if $(\gamma^{(k)}(a), \nu^{(k)}(a))=
(\gamma^{(k)}(b), \nu^{(k)}(b))$ for all $ k \in \mathbb{N} \cup \{0\}$. 
In this paper, we say that $(\gamma, \nu)$ is a \emph{closed} Legendre curve if the
curve is $C^{\infty}$-closed (cf. \cite{Fukunaga-Takahashi-2016}). 
When $a$ and $b$ are singular points (respectively, inflection points) of $\gamma$, we treat these singular points (respectively, inflection points) as one singular point (respectively, inflection point). 

\section{Curvature equivalence}

We introduce an equivalence relation for Legendre curves. 
Let $I$ and $J$ be intervals of $\R$. 
Let $(\gamma,\nu): I \to \R^2 \times S^1$ and $(\widetilde{\gamma},\widetilde{\nu}): J \to \R^2 \times S^1$ be Legendre curves with curvatures $(\ell,\beta)$ and $(\widetilde{\ell},\widetilde{\beta})$, respectively.
\begin{definition}{\rm 
We say that two Legendre curves $(\gamma,\nu):I \to \R^2 \times S^1$ and 
$(\widetilde{\gamma},\widetilde{\nu}):J \to \R^2 \times S^1$ are {\it curvature equivalent} if the curvatures $(\ell,\beta)$ and $(\widetilde{\ell},\widetilde{\beta})$ are $\mathcal{S}$-$\mathcal{K}$-equivalent ({\it strictly $\mathcal{K}$-equivalent}), that is, 
there exist nowhere-zero smooth functions $\lambda_1,\lambda_2:J \to \R$ 
and a diffeomorphism $\phi:J \to I$ such that $(\widetilde{\ell},\widetilde{\beta})=(\lambda_1 \ell \circ \phi, \lambda_2 \beta \circ \phi)$.
}
\end{definition}
It is easy to show that curvature equivalence is an equivalent relation of Legendre curves.
By definition, curvature equivalence preserves the inflection and singular points of frontals.

\begin{remark}{\rm 
If $(\ell,\beta)$ and $(\widetilde{\ell},\widetilde{\beta})$ are $\mathcal{S}$-$\mathcal{K}$-equivalent, then $(\ell,\beta)$ and $(\widetilde{\ell},\widetilde{\beta})$ are $\mathcal{K}$-equivalent. 
Moreover, $\ell$ and $\widetilde{\ell}$ (respectively, $\beta$ and $\widetilde{\beta}$) are $\mathcal{K}$-equivalent.
}
\end{remark}

\begin{example}[Type $(n,m)$]\label{nm-type-curvature-equivalent}
{\rm 
Let $n, k \in \mathbb{N}$ and $m=n+k$. 
We consider $\gamma$ is of type $(n,m)$ (Example \ref{nm-type}).
Let $(\gamma,\nu):(\R,0) \to \R^2 \times S^1$,
\begin{align*}
\gamma(t) =(\pm t^n,t^mf(t)),\ 
\nu(t) =\frac{1}{\sqrt{(mt^kf(t)+t^{k+1}\dot{f}(t))^2+n^2}}(-mt^kf(t)-t^{k+1}\dot{f}(t),\pm n),
\end{align*}
where $f:(\R,0) \to \R$ is a smooth function germ with $f(0) \not=0$. 
Then the curvature 
\begin{align*}
\ell(t)&=\pm \frac{nt^{k-1}(mkf(t)+(m+k+1)t\dot{f}(t)+t^{2}\ddot{f}(t))}{(mt^kf(t)+t^{k+1}\dot{f}(t))^2+n^2},\\ \beta(t)&=-t^{n-1}\sqrt{(mt^kf(t)+t^{k+1}\dot{f}(t))^2+n^2}
\end{align*}
is $\mathcal{S}$-$\mathcal{K}$-equivalent to $(t^{k-1},t^{n-1})$ around $0$.
}
\end{example}

\begin{proposition}\label{invariants}
Let $(\gamma,\nu):I \to \R^2 \times S^1$ be a Legendre curve with curvature $(\ell,\beta)$. 
We denote $\nu=(a,b)$.
\par
$(1)$ Let $t:\widetilde{I} \to I$ be a parameter change, where $\widetilde{I}$ is an interval, and   $(\widetilde{\gamma},\widetilde{\nu})=(\gamma \circ t, \nu \circ t): \widetilde{I} \to \R^2 \times S^1$. 
Then $(\gamma,\nu)$ and $(\widetilde{\gamma},\widetilde{\nu})$ are curvature equivalent.
\par
$(2)$ Let $\Phi:\R^2 \to \R^2$ be $\Phi(x,y)=(a_{11}x+a_{12}y,a_{21}x+a_{22}y)$, where $a_{11}a_{22}-a_{12}a_{21} \not=0$ and $a_{11},a_{12},a_{21},a_{22} \in \R$ and let 
$(\widetilde{\gamma},\widetilde{\nu}):I \to \R^2 \times S^1$ be
$\widetilde{\gamma}=\Phi \circ \gamma$, $\widetilde{\nu}=\overline{\nu}/|\overline{\nu}|$ and 
$\overline{\nu}=(a_{22}a-a_{21}b,-a_{12}a+a_{11}b)$. 
Then $(\gamma,\nu)$ and $(\widetilde{\gamma},\widetilde{\nu})$ are curvature equivalent.
\par
$(3)$ Let $\Phi: \R^2 \to \R^2$ be $\Phi(x,y)=(y,x)$ and $(\widetilde{\gamma},\widetilde{\nu}):I \to \R^2 \times S^1$ be $\widetilde{\gamma}=\Phi \circ \gamma$, $\widetilde{\nu}=(-b,-a)$.
Then $(\gamma,\nu)$ and $(\widetilde{\gamma},\widetilde{\nu})$ are curvature equivalent. 
\par
$(4)$ $(\gamma,\nu)$ and $(\gamma,-\nu)$ are curvature equivalent. Moreover, $(\gamma,\nu)$ and $(-\gamma,\nu)$ are also curvature equivalent.
\end{proposition}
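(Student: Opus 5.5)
Each of the four assertions comes from reading off the curvature of the transformed curve from an earlier result. In each case we then exhibit a diffeomorphism $\phi$ and nowhere-zero functions $\lambda_1,\lambda_2$ such that $(\widetilde{\ell},\widetilde{\beta})=(\lambda_1\,\ell\circ\phi,\lambda_2\,\beta\circ\phi)$, as the definition of curvature equivalence requires.

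For (1), Proposition \ref{Legendre.function} gives the curvature of $(\gamma\circ t,\nu\circ t)$ as $((\ell\circ t)t',(\beta\circ t)t')$. Since $t$ is a parameter change, $t'(u)\neq 0$ for all $u$. Hence $t:\widetilde{I}\to t(\widetilde{I})$ is a diffeomorphism onto an interval, and we may take $\phi=t$ and $\lambda_1=\lambda_2=t'$. If $t$ is not onto $I$, we regard the original curve as restricted to $t(\widetilde{I})$; I would note this implicitly, as the paper does.

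For (2), Corollary \ref{change-target}(1) gives
\[
(\widetilde{\ell},\widetilde{\beta})=\bigl((a_{11}a_{22}-a_{12}a_{21})\ell/|\overline{\nu}|^2,\ |\overline{\nu}|\beta\bigr).
\]
Take $\phi=\mathrm{id}$, $\lambda_1=(a_{11}a_{22}-a_{12}a_{21})/|\overline{\nu}|^2$ and $\lambda_2=|\overline{\nu}|$. The only point needing checking is that $\overline{\nu}$ never vanishes, so that both $\lambda_i$ are smooth and nowhere zero. This holds because $\overline{\nu}$ is the image of the unit vector $\nu=(a,b)$ under the linear map with matrix $\begin{pmatrix} a_{22} & -a_{21}\\ -a_{12} & a_{11}\end{pmatrix}$. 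That matrix has determinant $a_{11}a_{22}-a_{12}a_{21}\neq0$, so the map is invertible and $\overline{\nu}(t)\neq0$. This is the only step requiring any argument, and it is mild.

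For (3), Corollary \ref{change-target}(2) gives $(\widetilde{\ell},\widetilde{\beta})=(-\ell,\beta)$; take $\phi=\mathrm{id}$, $\lambda_1=-1$ and $\lambda_2=1$. For the first part of (4), Proposition \ref{nu-change} gives the curvature of $(\gamma,-\nu)$ as $(\ell,-\beta)$; take $\lambda_1=1$ and $\lambda_2=-1$. For $(-\gamma,\nu)$, first check it is a Legendre curve: $-\dot\gamma\cdot\nu=0$. Then compute directly, with $\mu=J\nu$ unchanged: $\ell=\dot\nu\cdot\mu$ is unchanged, and $\widetilde{\beta}=(-\dot\gamma)\cdot\mu=-\beta$. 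So the curvature is $(\ell,-\beta)$, and the same $\lambda_1=1$, $\lambda_2=-1$ work. Alternatively, $(-\gamma,\nu)$ is the image of $\gamma$ under the linear map $-\mathrm{id}$ in (2), which yields $\overline{\nu}=-\nu$. Composing with the first part of (4) gives the same conclusion.
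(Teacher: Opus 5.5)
Your proposal is correct and follows the paper's proof: in each part you read off the transformed curvature from Proposition \ref{Legendre.function}, Corollary \ref{change-target}, or Proposition \ref{nu-change} (and, for $(-\gamma,\nu)$, from a direct computation) and then exhibit the $\mathcal{S}$-$\mathcal{K}$-equivalence. You also supply two details the paper leaves implicit: that $\overline{\nu}$ never vanishes, so the $\lambda_i$ are smooth and nowhere zero, and that $t$ must map $\widetilde{I}$ onto $I$ for $\phi=t$ to be a diffeomorphism $\widetilde{I}\to I$.
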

\demo
$(1)$ By Proposition \ref{Legendre.function}, the curvature of $(\widetilde{\gamma},\widetilde{\nu})$ is given by $((\ell \circ t) t', (\beta \circ t) t')$. 
Then it is $\mathcal{S}$-$\mathcal{K}$-equivalent to $(\ell,\beta)$.
Hence, $(\gamma,\nu)$ and $(\widetilde{\gamma},\widetilde{\nu})$ are curvature equivalent.
\par
$(2)$ By Corollary \ref{change-target} $(1)$, the curvature of $(\widetilde{\gamma},\widetilde{\nu})$ is given by $((a_{11}a_{22}-a_{12}a_{21})\ell/|\overline{\nu}|^2,|\overline{\nu}|\beta)$. 
Hence, $(\gamma,\nu)$ and $(\widetilde{\gamma},\widetilde{\nu})$ are curvature equivalent.
\par
$(3)$ By Corollary \ref{change-target} $(2)$, the curvature of $(\widetilde{\gamma},\widetilde{\nu})$ is given by $(-\ell,\beta)$. 
Hence, $(\gamma,\nu)$ and $(\widetilde{\gamma},\widetilde{\nu})$ are curvature equivalent.
\par
$(4)$ By Proposition \ref{nu-change}, the curvature of $(\gamma,-\nu)$ is given by $(\ell,-\beta)$. 
Hence, $(\gamma,\nu)$ and $(\gamma,-\nu)$ are curvature equivalent. 
Moreover, $(-\gamma,\nu):I \to \R^2 \times S^1$ is a Legendre curve with curvature $(\ell,-\beta)$. 
Hence, $(\gamma,\nu)$ and $(-\gamma,\nu)$ are curvature equivalent. 
\enD

\subsection{Local classifications of Legendre curves under the curvature equivalence}

We give a local classification of Legendre curves by the curvature equivalence 
under the condition that $\gamma$ is a finite type.

\begin{theorem}[Local classifications]\label{local-classifications}
Let $\gamma:(I,t_0) \to (\R^2,0)$ be a smooth map germ with $\gamma(t)=(x(t),y(t))$ and $n, m \in \mathbb{N}$. Suppose that $x$ has type $A_{n-1}$ and $y$ has type $A_{m-1}$ at $t_0$.
\par
$(1)$ Suppose that $n < m=n+k$, where $k \in \mathbb{N}$.  
Then there exists a smooth map $\nu:(I,t_0) \to S^1$ such that 
the Legendre curve $(\gamma,\nu):(I,t_0) \to \R^2 \times S^1$ is curvature equivalent to  $(\widetilde{\gamma},\widetilde{\nu}):(\R,0) \to \R^2 \times S^1$, 
$$
\widetilde{\gamma}(t)=(t^n,t^m), \ \widetilde{\nu}(t)=\frac{1}{\sqrt{m^2t^{2k}+n^2}}(-mt^k,n).
$$
\par
$(2)$ Suppose that $n = m$. Then there exist a smooth function germ $g: (\R,0) \to (\R,0)$ and a  non-zero constant $c \in \R$ such that $\gamma$ is $\mathcal{R}$-equivalent to $(\R,0) \to (\R,0), t \mapsto (\pm t^n, t^n (c+g(t)))$.
\par
$(i)$ If $g=0$,  
then there exists a smooth map $\nu:(I,t_0) \to S^1$ such that 
the Legendre curve $(\gamma,\nu):(I,t_0) \to \R^2 \times S^1$ is curvature equivalent to  $(\widetilde{\gamma},\widetilde{\nu}):(\R,0) \to \R^2 \times S^1$, 
$$
\widetilde{\gamma}(t)=(t^n,t^n), \ \widetilde{\nu}(t)=(-1,1).
$$
\par
$(ii)$ If $g$ has type $A_{p-1}$, where $p \in \mathbb{N}$,  
then there exists a smooth map $\nu:(I,t_0) \to S^1$ such that 
the Legendre curve $(\gamma,\nu):(I,t_0) \to \R^2 \times S^1$ is curvature equivalent to  $(\widetilde{\gamma},\widetilde{\nu}):(\R,0) \to \R^2 \times S^1$, 
$$
\widetilde{\gamma}(t)=(t^n,t^n(1+t^p)), \ 
\widetilde{\nu}(t)=\frac{1}{\sqrt{(n+nt^p+pt^{p})^2+n^2}}(-(n(1+t^p)+pt^{p}),n).
$$
\par
$(3)$ Suppose that $n=m+k > m$, where $k \in \mathbb{N}$.  
Then there exists a smooth map $\nu:(I,t_0) \to S^1$ such that 
the Legendre curve $(\gamma,\nu):(I,t_0) \to \R^2 \times S^1$ is curvature equivalent to  $(\widetilde{\gamma},\widetilde{\nu}):(\R,0) \to \R^2 \times S^1$, 
$$
\widetilde{\gamma}(t)=(t^n,t^m), \ \widetilde{\nu}(t)=\frac{1}{\sqrt{m^2+n^2t^{2k}}}(m,-nt^k).
$$
\end{theorem}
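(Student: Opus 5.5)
The whole proof rests on one observation. A smooth one-variable germ $f$ of type $A_{j-1}$ at $t_0$ can be written as $f(t)=(t-t_0)^{j}u(t)$ with $u(t_0)\neq 0$, by Hadamard's lemma. Consequently a curvature $(\ell,\beta)$ with $\mathrm{ord}(\ell;t_0)=p$ and $\mathrm{ord}(\beta;t_0)=q$ is $\mathcal{S}$-$\mathcal{K}$-equivalent, after translating $t_0$ to $0$, to $(t^{p},t^{q})$; the multipliers $\lambda_1,\lambda_2$ are the reciprocals of the units. If $\ell\equiv 0$, the first component is simply $0$. So in each case I only need to do two things: choose $\nu$, and compute the orders of vanishing of $\ell$ and $\beta$ for $(\gamma,\nu)$ and for the model.

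\emph{Case (1).} By Proposition \ref{nm-type-parametrization} there is a diffeomorphism germ $\phi:(\R,0)\to(I,t_0)$ such that $\gamma\circ\phi(t)=(\pm t^n,t^mf(t))$ with $f(0)\neq0$. Let $\nu_0$ be the frame of Example \ref{nm-type} for this curve, and put $\nu=\nu_0\circ\phi^{-1}$. Then $(\gamma,\nu)$ is a Legendre curve, because $\nu$ is a reparametrisation of the frame of a Legendre curve. By Proposition \ref{invariants} (1), $(\gamma,\nu)$ is curvature equivalent to $(\gamma\circ\phi,\nu_0)$. By Example \ref{nm-type-curvature-equivalent}, the curvature of $(\gamma\circ\phi,\nu_0)$ is $\mathcal{S}$-$\mathcal{K}$-equivalent to $(t^{k-1},t^{n-1})$. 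The model $(\widetilde\gamma,\widetilde\nu)$ is exactly Example \ref{nm-type} with $f\equiv1$ and the $+$ signs, so its curvature is also $\mathcal{S}$-$\mathcal{K}$-equivalent to $(t^{k-1},t^{n-1})$. Transitivity then gives the claim.

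\emph{Case (3).} Apply $\Phi(x,y)=(y,x)$ to reduce to case (1). Proposition \ref{invariants} (3) shows that this swap, together with $\nu\mapsto(-b,-a)$, preserves curvature equivalence. The swapped model of case (1), namely $(t^m,t^n)$, comes with the frame $(-m,nt^k)/\sqrt{m^2+n^2t^{2k}}$. Replacing this frame by its negative, which is allowed by Proposition \ref{invariants} (4), gives exactly the stated $\widetilde\nu$.

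\emph{Case (2).} First reparametrise so that $x=\pm s^n$; this is possible because $x$ has type $A_{n-1}$. Since $y$ still has type $A_{n-1}$, we can write $y=s^nh(s)$ with $h(0)=c\neq0$, and set $g=h-c$. This gives the stated normal form. Next put
\[
H(s)=h(s)+\frac{s\,\dot h(s)}{n},
\]
so that $\dot y=ns^{n-1}H$. Then $\dot\gamma=ns^{n-1}(\pm1,H)$, and I take $\nu=(-H,\pm1)/\sqrt{H^2+1}$, transported back by the reparametrisation. With this choice:
\begin{itemize}
\item $\beta$ is a unit times $s^{n-1}$;
\item $\ell$ is a unit times $\dot H$, since $\ell$ is the derivative of the angle of $\nu$ and this equals $\pm\dot H/(1+H^2)$.
\end{itemize}
If $g=0$, then $H\equiv c$ and $\ell\equiv0$. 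The model $(t^n,t^n)$ with $\widetilde\nu=(-1,1)$ also has $\widetilde\ell\equiv0$, and its $\widetilde\beta$ is a unit times $t^{n-1}$. (Strictly, the constant frame should be normalised as $(-1,1)/\sqrt2$; that does not change $\ell$.) If instead $g=s^pu$ with $u(0)\neq0$, then
\[
\dot H=\frac{(n+1)\dot g+s\ddot g}{n}=\frac{p(n+p)}{n}\,u(0)\,s^{p-1}+O(s^{p}),
\]
so $\mathrm{ord}(\ell;0)=p-1$ and the curvature is $\mathcal{S}$-$\mathcal{K}$-equivalent to $(t^{p-1},t^{n-1})$. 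The model with $c=1$ and $g=t^p$ gives the same pair, and its $\widetilde\nu$ is precisely the frame just constructed.

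The main obstacle is this last computation. I must verify that the leading coefficient $p(n+p)u(0)/n$ is nonzero, and that no cancellation occurs between the $(n+1)\dot g$ and $s\ddot g$ terms. Both hold because $n,p>0$. Along the way I need to check that the sign choices $\pm$ and the normalising factor $\sqrt{H^2+1}$ only change $\ell$ and $\beta$ by nowhere-zero factors.
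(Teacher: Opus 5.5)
Your proof is correct. Parts (1) and (2) follow the paper's proof essentially step for step. You use the same normal forms: Proposition~\ref{nm-type-parametrization} and its $n=m$ analogue. You use the same frame, since your $(-H,\pm1)/\sqrt{H^2+1}$ is the paper's $\overline{\nu}$ after cancelling $n$, because $nH=n(c+g)+s\dot g$. You also get the same leading coefficient $p(n+p)$ in $(n+1)\dot g+s\ddot g$. Your remark that the constant frame in (2)(i) must be $(-1,1)/\sqrt{2}$ to lie in $S^1$ is right; the statement has a typo there.

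The genuine difference is part (3). The paper repeats the argument of (1) from scratch. It uses the normal form $(t^nf(t),\pm t^m)$ with frame $(\pm m,-nt^kf(t)-t^{k+1}\dot f(t))/\sqrt{(nt^kf(t)+t^{k+1}\dot f(t))^2+m^2}$, and recomputes that the curvature is $\mathcal{S}$-$\mathcal{K}$-equivalent to $(t^{k-1},t^{m-1})$. You instead apply (1) to $(y,x)$ and transport the result back with Proposition~\ref{invariants} (3) and (4).

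Your route saves a computation and shows directly that (3) is just (1) up to the coordinate swap; the paper only records this in the remark after the theorem. The paper's route gives the frame for $\gamma$ directly in the original coordinates. In yours, the frame is $\nu=(-\hat b,-\hat a)$, where $(\hat a,\hat b)$ is the frame that (1) provides for $(y,x)$; it would be worth stating this explicitly.

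One small point of wording. The frame $(-m,nt^k)/\sqrt{m^2+n^2t^{2k}}$ belongs to the swapped-back curve $(t^n,t^m)$. The case-(1) model itself is $(t^m,t^n)$ with frame $(-nt^k,m)/\sqrt{n^2t^{2k}+m^2}$.
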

\demo
$(1)$ By Proposition~\ref{nm-type-parametrization}, 
$\gamma$ is $\mathcal{R}$-equivalent to of type $(n,m)$. 
By Proposition~\ref{invariants} (1) and Example \ref{nm-type}, there exists a smooth map $\nu:I \to S^1$ such that $(\gamma,\nu)$ and $(\overline{\gamma},\overline{\nu}):(\R,0) \to \R^2 \times S^1$,
\begin{align*}
\overline{\gamma}(t) =(\pm t^n,t^mf(t)),\ 
\overline{\nu}(t) =\frac{1}{\sqrt{(mt^kf(t)+t^{k+1}\dot{f}(t))^2+n^2}}(-mt^kf(t)-t^{k+1}\dot{f}(t),\pm n),
\end{align*}
are curvature equivalent. 
Here, $f:(\R,0) \to \R$ is a function germ with $f(0) \not=0$. 
By Example \ref{nm-type-curvature-equivalent}, the curvature $(\overline{\ell},\overline{\beta})$ of $(\overline{\gamma},\overline{\nu})$ is $\mathcal{S}$-$\mathcal{K}$-equivalent to $(t^{k-1},t^{n-1})$. 
By a direct calculation, the curvature $(\widetilde{\ell},\widetilde{\beta})$ of the Legendre curve $(\widetilde{\gamma},\widetilde{\nu}):(\R,0) \to \R^2 \times S^1$, 
$$
\widetilde{\gamma}(t)=(t^n,t^m), \ \widetilde{\nu}(t)=\frac{1}{\sqrt{m^2t^{2k}+n^2}}(-mt^k,n)
$$
is also $\mathcal{S}$-$\mathcal{K}$-equivalent to $(t^{k-1},t^{n-1})$. 
It follows that $(\overline{\gamma},\overline{\nu})$ and $(\widetilde{\gamma},\widetilde{\nu})$ are curvature equivalent. 
Hence, $(\gamma,\nu)$ and $(\widetilde{\gamma},\widetilde{\nu})$ are curvature equivalent. 
\par
$(2)$ 
By a similar argument of 
Proposition~\ref{nm-type-parametrization}, 
we can show that there exist a function germ $g: (\R,0) \to (\R,0)$ 
and a non-zero constant $c \in \R$ such that 
$\gamma$ is $\mathcal{R}$-equivalent to $(\R,0) \to (\R,0), t \mapsto 
\left(
\pm t^n, t^n (c+g(t))
\right)$.
\par
$(i)$ Since $\gamma$ is $\mathcal{R}$-equivalent to $(\R,0) \to (\R^2,0), t \mapsto (\pm t^n, ct^n)$,  
there exists a smooth map $\nu:(I,t_0) \to S^1$ such that $(\gamma,\nu)$ and $(\overline{\gamma},\overline{\nu}):(\R,0) \to \R^2 \times S^1$,
$$
\overline{\gamma}(t)=(\pm t^n,c t^n), \ \overline{\nu}(t)=\frac{1}{\sqrt{c^2+1}}(-c,\pm 1)
$$
are curvature equivalent. 
The curvature $(\overline{\ell},\overline{\beta})$ of $(\overline{\gamma},\overline{\nu})$ is given by $(0,-n\sqrt{c^2+1}t^{n-1})$.
Hence, $(\overline{\ell},\overline{\beta})$ is $\mathcal{S}$-$\mathcal{K}$-equivalent to $(0,t^{n-1})$. 
By a direct calculation, the curvature $(\widetilde{\ell},\widetilde{\beta})$ of the Legendre curve $(\widetilde{\gamma},\widetilde{\nu}):(\R,0) \to \R^2 \times S^1$, 
$$
\widetilde{\gamma}(t)=(t^n,t^n), \ \widetilde{\nu}(t)=(-1,1)
$$
is also $\mathcal{S}$-$\mathcal{K}$-equivalent to $(0,t^{n-1})$. 
It follows that $(\overline{\gamma},\overline{\nu})$ and $(\widetilde{\gamma},\widetilde{\nu})$ are curvature equivalent. 
Hence, $(\gamma,\nu)$ and $(\widetilde{\gamma},\widetilde{\nu})$ are curvature equivalent.
\par
$(ii)$ Since $g$ has type $A_{p-1}$, we can denote $g(t)=t^p h(t)$, where $h:(\R,0) \to \R$ is a smooth function with $h(0) \not=0$. 
Since $\gamma$ is $\mathcal{R}$-equivalent to $(\R,0) \to (\R^2,0), t \mapsto (\pm t^n, t^n (c+g(t)))$,  
there exists a smooth map $\nu:(I,t_0) \to S^1$ such that $(\gamma,\nu)$ and $(\overline{\gamma},\overline{\nu}):(\R,0) \to \R^2 \times S^1$,
$$
\overline{\gamma}(t)=(\pm t^n,  t^n (c+g(t))), \ \overline{\nu}(t)
=
\frac{1}{\sqrt{
(n(c+g(t))+t\dot{g}(t))^2+n^2}}
\left(
-(n(c+g(t))+t\dot{g}(t)),\pm n
\right)
$$
are curvature equivalent. 
The curvature $(\overline{\ell},\overline{\beta})$ of $(\overline{\gamma},\overline{\nu})$ is given by 
\[
\left(
\pm\frac{n((n+1)\dot{g}(t)+t\ddot{g}(t))}{(n(c+g(t))+t\dot{g}(t))^2+n^2}, 
-t^{n-1}\sqrt{(n(c+g(t))+t\dot{g}(t))^2+n^2} 
\right).
\]
Moreover, 
\begin{align*}
(n+1)\dot{g}(t)+t\ddot{g}(t)&=(n+1)(pt^{p-1}h(t)+t^p \dot{h}(t))+t(p(p-1)t^{p-2}h(t)+2pt^{p-1}\dot{h}(t)+t^p\ddot{h}(t))\\
&=\left( ((n+1)p+p(p-1))h(t)+(n+1+2p)t \dot{h}(t)+t^2\ddot{h}(t) \right)t^{p-1}.
\end{align*}
Hence, $(\overline{\ell},\overline{\beta})$ is $\mathcal{S}$-$\mathcal{K}$-equivalent to $(t^{p-1},t^{n-1})$. 
By a direct calculation, the curvature $(\widetilde{\ell},\widetilde{\beta})$ of the Legendre curve $(\widetilde{\gamma},\widetilde{\nu}):(\R,0) \to \R^2 \times S^1$, 
$$
\widetilde{\gamma}(t)=(t^n,t^n(1+t^p)), 
\ \widetilde{\nu}(t)=\frac{1}{\sqrt{(n+nt^p+pt^{p})^2+n^2}}(-(n(1+t^p)+pt^{p}),n)
$$
is also $\mathcal{S}$-$\mathcal{K}$-equivalent to $(t^{p-1},t^{n-1})$. 
It follows that $(\overline{\gamma},\overline{\nu})$ and $(\widetilde{\gamma},\widetilde{\nu})$ are curvature equivalent. 
Hence, $(\gamma,\nu)$ and $(\widetilde{\gamma},\widetilde{\nu})$ are curvature equivalent.
\par
$(3)$ By the same arguments as in $(1)$, there exists a smooth map $\nu:I \to S^1$ such that $(\gamma,\nu)$ and $(\overline{\gamma},\overline{\nu}):(\R,0) \to \R^2 \times S^1$,
\begin{align*}
\overline{\gamma}(t) =(t^n f(t), \pm t^m),\ 
\overline{\nu}(t) =\frac{1}{\sqrt{(nt^kf(t)+t^{k+1}\dot{f}(t))^2+m^2}}(\pm m,-nt^kf(t)-t^{k+1}\dot{f}(t)),
\end{align*} 
are curvature equivalent. 
Here, $f:(\R,0) \to \R$ is a smooth function germ with $f(0) \not=0$.   
The curvature $(\overline{\ell},\overline{\beta})$ of $(\overline{\gamma},\overline{\nu})$ is $\mathcal{S}$-$\mathcal{K}$-equivalent to $(t^{k-1},t^{m-1})$. 
By a direct calculation, the curvature $(\widetilde{\ell},\widetilde{\beta})$ of the Legendre curve $(\widetilde{\gamma},\widetilde{\nu}):(\R,0) \to \R^2 \times S^1$, 
$$
\widetilde{\gamma}(t)=(t^n,t^m), \ \widetilde{\nu}(t)=\frac{1}{\sqrt{n^2t^{2k}+m^2}}(m,-n t^k)
$$
is also $\mathcal{S}$-$\mathcal{K}$-equivalent to $(t^{k-1},t^{m-1})$. 
It follows that $(\overline{\gamma},\overline{\nu})$ and $(\widetilde{\gamma},\widetilde{\nu})$ are curvature equivalent. 
Hence, $(\gamma,\nu)$ and $(\widetilde{\gamma},\widetilde{\nu})$ are curvature equivalent. 
\enD

\begin{remark}{\rm
In Theorem \ref{local-classifications} $(2)$ $(i)$, 
$(\widetilde{\gamma},\widetilde{\nu})$ is also curvature equivalent to 
$(\hat{\gamma}(t),\hat{\nu}(t))=((t^n,0),(0,1))$ by Proposition~\ref{invariants} (2). 
In Theorem \ref{local-classifications} $(2)$ $(ii)$, 
$(\widetilde{\gamma},\widetilde{\nu})$ is also curvature equivalent to 
$(\hat{\gamma}(t),\hat{\nu}(t))=((t^n,t^{n+p}),(-(n+p)t^p,n)/{\sqrt{(n+p)^2t^{2p}+n^2}})$. 
In Theorem \ref{local-classifications} $(3)$, $(\widetilde{\gamma},\widetilde{\nu})$ is also curvature equivalent to $(\hat{\gamma}(t),\hat{\nu}(t))=((t^m,t^n),(-nt^k,m)/{\sqrt{n^2t^{2k}+m^2}})$ by Propositions~\ref{invariants} (3) and (4).  
This Legendre curve $(\hat{\gamma},\hat{\nu})$ is 
just the Legendre curve $(\widetilde{\gamma},\widetilde{\nu})$ in 
Theorem~\ref{local-classifications} $(1)$. 
}
\end{remark}

\subsection{Global classifications of Legendre curves under the curvature equivalence}

Let $(\gamma,\nu):I \to \R^2 \times S^1$ be a Legendre curve with curvature $(\ell,\beta)$.
We consider $Z(\ell)$ and $Z(\beta)$ as the sets of zero points of $\ell$ and $\beta$, that is, the sets of inflection points and singular points.
We denote 
$Z(\ell) =\{s_1, \dots, s_m\}$ with $s_1<\cdots<s_m$, 
$Z(\beta)=\{t_1, \dots, t_n\}$ with $t_1<\cdots<t_n$.
Let $(u_1,\dots,u_{m+n})$ be arranged in ascending order, 
including the equal sign of $Z(\ell) \cup Z(\beta)$.
We say that the ordered tuple $(u_1,\dots,u_{m+n})$ 
is the {\it sequence order of the set of zero points $(\ell,\beta)$}. 

\begin{theorem}[Global classifications]\label{global-classifications}
Let $(\gamma,\nu):I \to \R^2 \times S^1$, $(\widetilde{\gamma},\widetilde{\nu}):\widetilde{I} \to \R^2 \times S^1$ be Legendre curves with curvatures $(\ell,\beta)$, $(\widetilde{\ell},\widetilde{\beta})$, respectively. 
Suppose that the numbers of inflection points and singular points are finite and the same, that is,  
$Z(\ell) =\{s_1,\dots, s_m\}$ with $s_1<\cdots<s_m$, 
$Z(\beta)=\{t_1,\dots, t_n\}$ with $t_1<\cdots<t_n$, 
$Z(\widetilde{\ell}) =\{\widetilde{s}_1, \dots, \widetilde{s}_m\}$ with $\widetilde{s}_1<\cdots<\widetilde{s}_m$, 
$Z(\widetilde{\beta})=\{\widetilde{t}_1, \dots, \widetilde{t}_n\}$ with $\widetilde{t}_1<\cdots<\widetilde{t}_n$. 
Moreover, the sequence orders $(u_1,\dots,u_{m+n})$ of the set of zero points $(\ell,\beta)$ and $(\widetilde{u}_1,\dots,\widetilde{u}_{m+n})$ of the set of zero points $(\widetilde{\ell},\widetilde{\beta})$ are the same, that is, 
if $u_k=s_i$ (respectively $u_k=t_j$), then $\widetilde{u}_k=\widetilde{s}_i$ (respectively, $\widetilde{u}_k=\widetilde{t}_j$) for all $k=1,\dots,m+n$. 
Further, the contact orders of $\ell$ at $s_i$ and of $\widetilde{\ell}$ at $\widetilde{s}_i$, the contact orders of $\beta$ at $t_j$ and of $\widetilde{\beta}$ at $\widetilde{t}_j$ are the same for all $i=1,\dots,m$, $j=1,\dots,n$.  
Then $(\gamma,\nu)$ and $(\widetilde{\gamma},\widetilde{\nu})$ are curvature equivalent. 
\end{theorem}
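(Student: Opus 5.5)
The plan is to construct the diffeomorphism $\phi:\widetilde{I}\to I$ first, matching the zero sets of the two curvatures. Then I define $\lambda_1=\widetilde{\ell}/(\ell\circ\phi)$ and $\lambda_2=\widetilde{\beta}/(\beta\circ\phi)$ away from the zeros, and show these quotients extend to smooth nowhere-zero functions on all of $\widetilde{I}$. I assume, as is implicit in the statement, that $I$ and $\widetilde{I}$ are intervals of the same type (both open, or both closed), so that an increasing diffeomorphism between them exists at all.

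\emph{Step 1 (the reparametrization).} List the distinct values among $u_1\le\cdots\le u_{m+n}$ as $v_1<\cdots<v_r$, and similarly $\widetilde{v}_1<\cdots<\widetilde{v}_r$ for the tilde curve. Because the sequence orders coincide, including the ties where some $s_i=t_j$, the two lists have the same length $r$. Moreover, $\widetilde{v}_q$ is a zero of $\widetilde{\ell}$ (respectively of $\widetilde{\beta}$) exactly when $v_q$ is a zero of $\ell$ (respectively of $\beta$), with matching indices. I then construct an increasing diffeomorphism $\phi:\widetilde{I}\to I$ with $\phi(\widetilde{v}_q)=v_q$ for all $q$. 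Concretely, fix a base point and set $\phi(u)=\phi(\widetilde{v}_1)+\int_{\widetilde{v}_1}^u\rho$, where $\rho>0$ is smooth. On each gap $[\widetilde{v}_q,\widetilde{v}_{q+1}]$, choose $\rho$ equal to $1$ near the endpoints plus a positive bump whose integral is adjusted to give $\int\rho=v_{q+1}-v_q$. On the two end pieces, choose $\rho$ so that $\phi$ maps the end piece of $\widetilde{I}$ onto the corresponding end piece of $I$, which is a standard construction. Then $\phi(\widetilde{s}_i)=s_i$ and $\phi(\widetilde{t}_j)=t_j$, and $\ell\circ\phi$ and $\widetilde{\ell}$ have exactly the same zero set, as do $\beta\circ\phi$ and $\widetilde{\beta}$.

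\emph{Step 2 (smoothness and non-vanishing of the quotients).} This is the only real point of the proof. Away from $Z(\widetilde{\ell})$ the quotient $\lambda_1=\widetilde{\ell}/(\ell\circ\phi)$ is smooth and nonzero. Near $\widetilde{s}_i$, let $r=\mathrm{ord}(\ell;s_i)=\mathrm{ord}(\widetilde{\ell};\widetilde{s}_i)$.
\begin{itemize}
\item By Hadamard's lemma, $\ell(s)=(s-s_i)^r a(s)$ with $a(s_i)\neq0$, and $\widetilde{\ell}(u)=(u-\widetilde{s}_i)^r\widetilde{a}(u)$ with $\widetilde{a}(\widetilde{s}_i)\neq0$.
\item Also $\phi(u)-s_i=(u-\widetilde{s}_i)c(u)$, where $c(u)=\int_0^1\phi'(\widetilde{s}_i+\tau(u-\widetilde{s}_i))\,d\tau>0$.
\end{itemize}
Combining these,
$$
\lambda_1(u)=\frac{\widetilde{a}(u)}{c(u)^r\,a(\phi(u))}
$$
near $\widetilde{s}_i$, which is smooth and nonzero there. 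The same argument applied to $\beta$ at each $t_j$ gives $\lambda_2$.

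Hence $(\widetilde{\ell},\widetilde{\beta})=(\lambda_1\,\ell\circ\phi,\lambda_2\,\beta\circ\phi)$, so the curvatures are $\mathcal{S}$-$\mathcal{K}$-equivalent and the Legendre curves are curvature equivalent.

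The main obstacle is Step 2, namely that equal contact orders are exactly what makes the quotient extend smoothly without vanishing. The remaining care is bookkeeping in Step 1: coincident zeros $s_i=t_j$ must be sent to coincident zeros, which is guaranteed by the tie-preserving sequence order.
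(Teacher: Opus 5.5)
Your proof is correct and follows essentially the same route as the paper. You build an increasing diffeomorphism sending the ordered zeros $\widetilde u_k$ to $u_k$ (the paper simply invokes a partition of unity), and then use the factorizations $(t-t_i)^r\cdot(\text{unit})$ to show the quotients extend to smooth nowhere-zero functions. Because you compose with $\phi$ directly instead of first reparametrizing the curve, your factor $c(u)^r$ does the job of Lemma~\ref{Leibnizrule} and your Step~2 is Lemma~\ref{function}, merged into one computation.

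The assumptions you make explicit (ties $s_i=t_j$ are preserved, and $I,\widetilde I$ admit a diffeomorphism matching the end pieces) are ones the paper also uses, but only implicitly. One routine fix in Step~1: on a gap with $v_{q+1}-v_q<\widetilde v_{q+1}-\widetilde v_q$, adding a positive bump to $1$ cannot give the required integral, so $\rho$ must be allowed to dip below $1$ there.
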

In order to prove Theorem \ref{global-classifications}, 
we prepare the following two Lemmas.

\begin{lemma}\label{Leibnizrule}
Let $f:I \to \R$ be a smooth function, 
$u\in I$ be a zero point of $f$ 
and $\varphi:\widetilde{I}\to I$ be a parameter change. 
Then, the contact order of $f$ at $u$ and 
that of $(f\circ\varphi)\varphi'$ at $\tilde{u}$ coincide. 
Here, $\widetilde{u}\in\widetilde{I}$ satisfies with 
$\varphi(\widetilde{u})=u$.
\end{lemma}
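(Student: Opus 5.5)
The plan is to compute the derivatives of $F=(f\circ\varphi)\varphi'$ at $\widetilde{u}$ directly by the Leibniz rule and the chain rule, and compare them with the derivatives of $f$ at $u$. Suppose $f$ has type $A_{k-1}$ at $u$, so that ord$(f;u)=k$. This means $f^{(i)}(u)=0$ for $i=0,\dots,k-1$ and $f^{(k)}(u)\neq 0$. We must show that $F^{(i)}(\widetilde{u})=0$ for $i=0,\dots,k-1$ and that $F^{(k)}(\widetilde{u})\neq 0$. By symmetry, namely applying the argument to $\varphi^{-1}$, this also handles the case where the contact order of $F$ is the given quantity. If $f$ is flat, the same computation shows $F$ is flat.

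Rather than working with derivatives, it is cleaner to use a factorization. By Hadamard's lemma (Taylor's formula with integral remainder), write $f(s)=(s-u)^k h(s)$ with $h$ smooth and $h(u)=f^{(k)}(u)/k!\neq 0$. Since $\varphi(\widetilde{u})=u$, apply Hadamard's lemma again to get $\varphi(v)-u=(v-\widetilde{u})\psi(v)$ with $\psi$ smooth and $\psi(\widetilde{u})=\varphi'(\widetilde{u})\neq 0$. Then
$$
F(v)=(v-\widetilde{u})^k\,\psi(v)^k\,h(\varphi(v))\,\varphi'(v).
$$
The factor $H(v)=\psi(v)^k h(\varphi(v))\varphi'(v)$ is smooth, and $H(\widetilde{u})=\varphi'(\widetilde{u})^{k+1}h(u)\neq 0$. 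It remains to expand $F=(v-\widetilde{u})^kH$ with the Leibniz rule: all derivatives of order less than $k$ vanish at $\widetilde{u}$, and $F^{(k)}(\widetilde{u})=k!\,H(\widetilde{u})\neq 0$. Hence ord$(F;\widetilde{u})=k$.

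No step is a real obstacle. The only point needing care is that the factor $\varphi'$ does not raise the order. This holds precisely because $\varphi$ is a parameter change, so $\varphi'(\widetilde{u})\neq0$. The remaining checks are bookkeeping: that the same normalization of contact order (ord $=k$ for type $A_{k-1}$) is used on both sides, and that the vanishing of $\psi$ and $\varphi'$ is excluded.
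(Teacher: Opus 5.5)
Your proof is correct and is essentially the paper's approach. The paper only says the lemma follows from the Leibniz rule and omits the details; your Hadamard factorization $F(v)=(v-\widetilde{u})^kH(v)$ is a tidy way to carry out that Leibniz computation without the higher-order chain-rule bookkeeping, and it gives $F^{(k)}(\widetilde{u})=\varphi'(\widetilde{u})^{k+1}f^{(k)}(u)\neq 0$. Your flat-case remark already makes the appeal to $\varphi^{-1}$ unnecessary, though that symmetry argument is also valid, since $(F\circ\varphi^{-1})(\varphi^{-1})'=f$.
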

This lemma is a direct consequence of the Liebnitz rule. 
Therefore, we omit the proof of this lemma.
\begin{lemma}\label{function}
Let $f,g: I \to \R$ be smooth functions. 
Suppose that the zero points of $f$ and $g$ are finite and the same $Z(f)=Z(g)=\{t_1,\dots,t_n\}$ with $t_1 <\dots <t_n$. 
Moreover, the contact orders of $f$ and of $g$ at $t_i$ are the same. 
Then there exists a nowhere-zero smooth function 
$\lambda:I \to \R$ such that $f(t)=\lambda(t) g(t)$ for all $t \in I$.
\end{lemma}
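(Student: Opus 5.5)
The plan is to define $\lambda$ directly as the quotient $f/g$ away from the common zero set and show that it extends smoothly and without zeros across each $t_i$. On $I\setminus\{t_1,\dots,t_n\}$ neither $f$ nor $g$ vanishes, so $\lambda:=f/g$ is smooth and nowhere zero there. The only issue is the behaviour near each $t_i$. Here we use that the contact orders are finite, since they are defined via type $A_k$.

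Fix $i$ and let $k_i=\mathrm{ord}(f;t_i)=\mathrm{ord}(g;t_i)$. Then $f^{(j)}(t_i)=g^{(j)}(t_i)=0$ for $j<k_i$, while $f^{(k_i)}(t_i)$ and $g^{(k_i)}(t_i)$ are both nonzero. By Taylor's formula with integral remainder (Hadamard's lemma applied $k_i$ times), we write
$$
f(t)=(t-t_i)^{k_i}f_i(t),\qquad g(t)=(t-t_i)^{k_i}g_i(t),
$$
where $f_i,g_i$ are smooth and $f_i(t_i)=f^{(k_i)}(t_i)/k_i!\neq 0$, $g_i(t_i)=g^{(k_i)}(t_i)/k_i!\neq 0$. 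Explicitly,
$$
f_i(t)=\frac{1}{(k_i-1)!}\int_0^1(1-s)^{k_i-1}f^{(k_i)}(t_i+s(t-t_i))\,ds,
$$
and similarly for $g_i$. This formula stays valid if $t_i$ is an endpoint of $I$, using one-sided derivatives. By continuity, choose a neighbourhood $U_i$ of $t_i$ containing no other $t_j$ on which $f_i$ and $g_i$ are nowhere zero. On $U_i$ the function $f_i/g_i$ is smooth and nowhere zero, and on $U_i\setminus\{t_i\}$ it equals $f/g$.

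Finally, define $\lambda(t)=f(t)/g(t)$ for $t\notin Z(g)$ and $\lambda(t_i)=f_i(t_i)/g_i(t_i)$. On each $U_i$, $\lambda$ coincides with the smooth nowhere-zero function $f_i/g_i$. On the open set $I\setminus Z(g)$ it is the smooth nowhere-zero function $f/g$. These open sets cover $I$, so $\lambda$ is smooth and nowhere zero on $I$, and $f=\lambda g$ holds everywhere, trivially at the $t_i$ where both sides vanish.

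The only real step is the factorization, which needs the equality of contact orders: if the orders differed, $f_i/g_i$ would acquire a factor $(t-t_i)^{\pm r}$ and $\lambda$ would either vanish or blow up at $t_i$. Everything else is routine gluing.
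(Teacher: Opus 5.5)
Your proposal is correct and follows the paper's proof: define $\lambda=f/g$ off the common zero set, and near each $t_i$ use equal contact orders to write $f=(t-t_i)^{r}\widetilde f$ and $g=(t-t_i)^{r}\widetilde g$ with $\widetilde f(t_i)\neq0\neq\widetilde g(t_i)$, so that $\widetilde f/\widetilde g$ gives the smooth nowhere-zero extension. You also supply details the paper omits: the explicit Taylor remainder formula for the factors, the treatment of endpoint zeros via one-sided derivatives, and the open-cover gluing argument.
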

\demo
Suppose that $I=[a,b]$ and $a<t_1<\dots<t_n<b$. 
We can also prove that the other cases, $a=t_1$ or $b=t_n$ or $(a,b]$ or $[a,b)$ or open interval $(a,b)$, by the same arguments. 
If $t \not \in Z(f)=Z(g)$, we define a smooth function $\lambda(t)=f(t)/g(t)$. 
If $t_i \in Z(f)=Z(g)$, by the same contact order 
$\mathrm{ord}(f; t_{i})=\mathrm{ord}(g;t_{i})=r$, 
there exist smooth function germs $\widetilde{f}, \widetilde{g}:(I,t_i) \to \R$ with 
$\widetilde{f}(t_i) \not=0$ and $\widetilde{g}(t_i) \not=0$ such that 
$f(t)=(t-t_i)^r\widetilde{f}(t)$ and $g(t)=(t-t_i)^r \widetilde{g}(t)$ around $t_i$. 
Moreover, we define a smooth function $\widetilde{\lambda}(t)=\widetilde{f}(t)/\widetilde{g}(t)$ around $t_i$. 
Then $\lambda(t)=\widetilde{\lambda}(t)$ around $t_i$ except for $t_i$. 
Hence, we can extend $\lambda$ to smoothly at $t_i$. 
Since $t_i$ is finite and the construction, there exists a nowhere-zero smooth function $\lambda:I \to \R$ such that $f(t)=\lambda(t) g(t)$ for all $t \in I$.
\enD
\par
{\it Proof of Theorem \ref{global-classifications}}. 
By using a partition of unity, there exists a parameter change, 
$\varphi: \widetilde{I} \to I$ with $\varphi'(u)>0$ such that $\varphi(\widetilde{u}_k)=u_k$ for all $k=1,\dots,m+n$. 
By Proposition \ref{invariants} $(1)$ 
and Lemma~\ref{Leibnizrule}, 
we may assume that $\widetilde{I}=I$ and 
$u_k=\widetilde{u}_k$ for all $k=1,\dots,m+n$. 
By assumptions and Lemma \ref{function}, there exist nowhere-zero smooth functions 
$\lambda_1,\lambda_2:I \to \R$ such that $\ell(t) =\lambda_1(t) \widetilde{\ell}(t)$ and $\beta(t) =\lambda_2(t) \widetilde{\beta}(t)$ for all $t \in I$.
It follows that $(\ell,\beta)$ and $(\widetilde{\ell},\widetilde{\beta})$ are $\mathcal{S}$-$\mathcal{K}$-equivalent. 
Hence, $(\gamma,\nu)$ and $(\widetilde{\gamma},\widetilde{\nu})$ are curvature equivalent. 
\enD
\par
According to Theorem \ref{global-classifications}, if the sequence order of the set of zero points $(\ell,\beta)$ and the contact order are known, then $(\gamma,\nu)$ can be uniquely determined up to curvature equivalence. 

\begin{remark}{\rm 
$(1)$ If $(\gamma,\nu)$ is a closed Legendre curve, then in the statement of 
Theorem~\ref{global-classifications}, 
the condition that 
``if $u_k=s_i$ (respectively $u_k=t_j$), then 
$\widetilde{u}_k=\widetilde{s}_{i}$ 
(respectively, $\widetilde{u}_k=\widetilde{t}_j$) 
for all $k=1,\dots,m+n$'' 
can be replaced by the condition that 
``if $u_k=s_i$ (respectively $u_k=t_j$),
 then  $\widetilde{u}_{k+l}=\widetilde{s}_i$ 
 (respectively, $\widetilde{u}_{k+l}=\widetilde{t}_j$) for all $k=1,\dots,m+n$, where $1 \le l \le m+n$ and $k+l$ mod $m+n$''. 
If the contact orders are the same as the corresponding points, then the assertion of Theorem \ref{global-classifications} holds. 
\par
$(2)$ The parameter change $t \mapsto -t$ is corresponding to the condition that 
``if $u_k=s_i$ (respectively $u_k=t_j$), then $\widetilde{u}_{m+n+1-k}=\widetilde{s}_{m+1-i}$ 
(respectively, $\widetilde{u}_{m+n+1-k}=\widetilde{t}_{n+1-j}$) 
for all $k=1,\dots,m+n$''. 
If the contact orders are the same as the corresponding points, then the assertion of Theorem \ref{global-classifications} holds. 
}
\end{remark}

\begin{proposition}\label{number}
Let $(\gamma,\nu): I \to \R^2 \times S^1$ be a closed Legendre curve with curvature $(\ell,\beta)$ 
that satisfies the conditions of Theorem~\ref{global-classifications}. 
Then the number of points with odd contact orders of $\ell$ is even. 
Also, the number of points with odd contact orders of $\beta$ is even.
\end{proposition}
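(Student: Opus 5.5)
The plan is a sign-change count on the closed parameter interval. Let $(\gamma,\nu):[a,b]\to\R^2\times S^1$ be closed. Since the curve is $C^\infty$-closed, the Frenet type formula gives $\ell=\dot\nu\cdot J(\nu)$ and $\beta=\dot\gamma\cdot J(\nu)$. Hence all one-sided derivatives of $\ell$ and $\beta$ agree at $a$ and $b$. So $\ell$ and $\beta$ extend to smooth $(b-a)$-periodic functions on $\R$, i.e.\ smooth functions on the circle $\R/(b-a)\mathbb{Z}$. By the convention in the paper, a zero at $a=b$ counts as a single point, and its contact order is well defined on the circle. I will argue for $\ell$; the argument for $\beta$ is identical.

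The local step uses the contact order. At a zero $s_i$ with $\mathrm{ord}(\ell;s_i)=r$, the proof of Lemma~\ref{function} gives $\ell(t)=(t-s_i)^r\widetilde{\ell}(t)$ with $\widetilde{\ell}(s_i)\neq 0$. So on a small punctured neighbourhood, $\ell$ changes sign across $s_i$ exactly when $r$ is odd. Between consecutive zeros $\ell$ has no zeros, so it has constant sign there.

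The global step goes once around the circle. Choose a base point $t^*$ with $\ell(t^*)\neq0$. If there is none, then $\ell$ has infinitely many zeros, contradicting finiteness; the case of no zeros at all is trivial. Traverse the circle from $t^*$ back to $t^*$, passing the finitely many zeros in cyclic order. The sign of $\ell$ flips exactly at the zeros of odd contact order, by the local step. Since we return to the value $\ell(t^*)$ with the same sign, the number of flips must be even. Thus the number of zeros of $\ell$ with odd contact order is even, and the same holds for $\beta$.

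The main obstacle is the endpoint. One must make sure that closedness really gives smooth periodicity of $\ell$ and $\beta$. This follows from the formulas above, since $\ell$ and $\beta$ are polynomial in derivatives of $\gamma$ and $\nu$. One must also make sure that a zero at $a=b$ is counted once, with its contact order computed from the two one-sided expansions, which agree. Choosing the base point $t^*$ at a nonzero value of $\ell$ avoids case distinctions about whether $a$ itself is a zero.
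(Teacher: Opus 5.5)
Your proof is correct and is essentially the paper's argument. The paper shifts the parameter so that $a,b\notin Z(\ell)$, uses that $\ell$ changes sign across $s_j$ exactly when $\mathrm{ord}(\ell;s_j)$ is odd, and then uses $\ell(a)=\ell(b)$ and the positivity of $\ell(\widetilde{s}_0)\ell(\widetilde{s}_1)^2\cdots\ell(\widetilde{s}_{m-1})^2\ell(\widetilde{s}_m)$ to force an even number of sign changes. Your base point $t^*$ with $\ell(t^*)\neq 0$ on the circle plays exactly the role of the paper's ``we may assume $a<s_1$ and $s_m<b$'', and your periodic-extension remark justifies that reduction a bit more explicitly.
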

\demo 
Suppose that 
$I=[a,b]$ and $Z(\ell)=\{s_{1},\ldots,s_{m}\}$ 
with 
$s_{1}<\cdots <s_{m}$. 
We may assume that 
$a<s_{1}$ and $s_{m}<b$. 
We set that 
$\widetilde{s}_{0}=a,
\tilde{s}_{j}=(s_{j}+s_{j+1})/2$ 
and 
$\widetilde{s}_{m}=b$ 
for $j=1,\ldots,m-1$. 
Since $\ell(a)=\ell(b)$, 
we have 
\begin{equation*}
\ell(\widetilde{s}_{0})\ell(\widetilde{s}_{1})^{2}
\cdots\ell(\widetilde{s}_{m-1})^{2}\ell(\widetilde{s}_{m})
=
\left(
\ell(\widetilde{s}_{0})\ell(\widetilde{s}_{1})
\right)
\cdots
\left(
\ell(\widetilde{s}_{m-1})\ell(\widetilde{s}_{m})
\right)
>0.
\end{equation*}
On the other hand, 
$\mathrm{ord}(\ell;s_{j})$ is odd 
if and only if 
$\ell(\widetilde{s}_{j-1})\ell(\widetilde{s}_{j})<0$ holds. 
Therefore, 
the number of points with odd contact orders of $\ell$ is even. 
For the case of $\beta$, the proof is the same. 
\enD

We give concrete examples of Legendre curves. See \cite{Fukunaga-Takahashi-2016, Nakatsuyama-Takahashi-2024} for more examples.
\begin{example}\label{example_ab}{\rm
Let $a,b \in \mathbb{N}$ and $a \not=b$. Suppose that there are no pair $(n,m) \in \mathbb{Z} \times \mathbb{Z}$ with $0 \le 1+2n < 4a$ and $0 \le 1+2m <4b$ such that $b(1+2n)=a(1+2m)$. 
Let $(\gamma[a,b],\nu[a,b]):[0,2\pi) \to \R^2 \times S^1$ be 
$$
\gamma[a,b](t)=(\sin at, \sin bt), \ \nu[a,b](t)=\frac{1}{\sqrt{a^2 \cos^2 at+b^2 \cos^2 bt}}\left(-b\cos bt, a\cos at\right).
$$
By assumption, $\nu[a,b]$ is a $C^\infty$ mapping, that is, $a^2 \cos^2 at+b^2 \cos^2 bt \not=0$ for all $t \in [0,2\pi)$. 
Since $\dot{\gamma}[a,b](t)=(a \cos at,b \cos bt)$, $\dot{\gamma}[a,b](t) \cdot \nu[a,b](t)=0$ for all $t \in [0,2\pi)$. 
It follows that $(\gamma[a,b],\nu[a,b])$ is a Legendre curve. 
By $\mu[a,b](t)=\left(-a\cos at,-b\cos bt\right)/\sqrt{a^2 \cos^2 at+b^2 \cos^2 bt}$ and a direct calculation, the curvature $(\ell[a,b],\beta[a,b])$ of $(\gamma[a,b],\nu[a,b])$ is given by
$$
(\ell[a,b](t),\beta[a,b](t))
=
\left(\frac{-ab(b\cos at\sin bt-a\sin at\cos bt)}{a^2 \cos^2 at+b^2 \cos^2 bt}, 
-\sqrt{a^2 \cos^2 at+b^2 \cos^2 bt}\right).
$$
Note that $(\ell[a,b],\beta[a,b])$ is $\mathcal{S}$-$\mathcal{K}$-equivalent to $(b\cos at\sin bt-a\sin at\cos bt,1)$.
}
\end{example}

\begin{example}\label{example_n}{\rm
Let $n \in \mathbb{N}$ and $n\ne 1$. 
Let $(\gamma[n],\nu[n]):[0,2\pi) \to \R^2 \times S^1$ be 
$$
\gamma[n](t)=(n\cos t-\cos nt, n\sin t-\sin nt), \ \nu[n](t)=\left(\sin \frac{n+1}{2}t, -\cos \frac{n+1}{2}t\right).
$$
Since $\dot{\gamma}[n](t)=(-n\sin t+n\sin nt,n\cos t-n\cos nt)$, 
\begin{align*}
\dot{\gamma}[n](t) \cdot \nu[n](t)
&=
\left(
\sin \frac{n+1}{2}t
\right)
(-n\sin t+n\sin nt)
-\left(
\cos \frac{n+1}{2}t
\right)
(n\cos t-n\cos nt) \\
&=-n\left(\cos\left(\frac{n+1}{2}-1\right)t-\cos\left(\frac{n+1}{2}-n\right)t\right)\\
&=-n\left(\cos\frac{n-1}{2}t-\cos\frac{n-1}{2}t\right)\\
&=0
\end{align*}
for all $t \in [0,2\pi)$. 
It follows that $(\gamma[n],\nu[n])$ is a Legendre curve. 
By 
$$
\mu[n](t)=\left(\cos \frac{(n+1)}{2} t,\sin \frac{(n+1)}{2} t \right)
$$ 
and a direct calculation, the curvature $(\ell[n],\beta[n])$ of $(\gamma[n],\nu[n])$ is given by
$$
(\ell[n](t),\beta[n](t))=\left(\frac{n+1}{2},2n\sin\frac{n-1}{2}t\right).
$$
Note that $(\ell[n],\beta[n])$ is $\mathcal{S}$-$\mathcal{K}$-equivalent to $(1,\sin ((n-1)t/2))$.
}
\end{example}

\begin{example}\label{example_m}{\rm
Let $m \in \mathbb{N}$. 
Let $(\gamma[m],\nu[m]):[0,2\pi) \to \R^2 \times S^1$ be 
$$
\gamma[m](t)=(m\sin t-\sin mt, m \cos t+\cos mt), \ \nu[m](t)=\left(-\cos \frac{m-1}{2}t, -\sin \frac{m-1}{2}t\right).
$$
Since $\dot{\gamma}[m](t)=(m\cos t-m\cos mt,-m\sin t-m\sin mt)$, 
\begin{align*}
\dot{\gamma}[m](t) \cdot \nu[m](t)
&=
-\left(
\cos \frac{m-1}{2}t
\right)
(m\cos t-m\cos mt)
-\left(
\sin \frac{m-1}{2}t
\right)
(-m\sin t-m\sin mt) \\
&=m\left(-\cos\left(\frac{m-1}{2}+1\right)t+\cos\left(\frac{m-1}{2}-m\right)t\right)\\
&=m\left(-\cos\frac{m+1}{2}t+\cos\frac{m+1}{2}t\right)\\
&=0
\end{align*}
for all $t \in [0,2\pi)$. 
It follows that $(\gamma[m],\nu[m])$ is a Legendre curve. 
By 
$$
\mu[m](t)=\left(\sin \frac{(m-1)}{2} t, -\cos \frac{(m-1)}{2} t\right)
$$ 
and a direct calculation, the curvature $(\ell[m],\beta[m])$ of $(\gamma[m],\nu[m])$ is given by
$$
(\ell[m](t),\beta[m](t))=\left(\frac{m-1}{2},2m\sin\frac{m+1}{2}t\right).
$$
Note that $(\ell[m],\beta[m])$ is $\mathcal{S}$-$\mathcal{K}$-equivalent to 
$(0, \sin t)$ if $m=1$, and to 
$(1,\sin ((m+1)t/2))$ if $m \ge 2$.
}
\end{example}
\par
If $n=m+2$ and $m \ge 2$, then the Legendre curves $(\gamma[n],\nu[n])$ as in Example \ref{example_n} and $(\gamma[m],\nu[m])$ as in Example \ref{example_m} are curvature equivalent by the direct definition. 
It is also follows from Theorem \ref{global-classifications}.


Nozomi Nakatsuyama, 
\\
Muroran Institute of Technology, Muroran 050-8585, Japan,
\\
E-mail address: 25096009b@muroran-it.ac.jp
\\
\\
Masatomo Takahashi, 
\\
Muroran Institute of Technology, Muroran 050-8585, Japan,
\\
E-mail address: masatomo@muroran-it.ac.jp
\\
\\
Minoru Yamamoto,
\\
Department of Mathematics, Faculty of Education, Hirosaki University, Hirosaki, Aomori, 036-8560, Japan
\\
E-mail address: minomoto@hirosaki-u.ac.jp


\begin{thebibliography}{99}
{\small

\bibitem{Arnold1}
V. I. Arnol'd,
\newblock{\em Singularities of Caustics and Wave Fronts}.
\newblock Mathematics and Its Applications {\bf 62} Kluwer Academic Publishers, 1990.

\bibitem{Arnold2}
V. I. Arnol'd, S. M. Gusein-Zade and A. N. Varchenko,
\newblock{\em Singularities of Differentiable Maps vol. {\rm I}}.
\newblock Birkh\"auser, 1986. 

\bibitem{Bruce-Gaffney} J. W. Bruce and T. J. Gaffney, 
\newblock{Simple singularities of mappings ${\mathbb C},0 \to {\mathbb C}^2,0$}. 
\newblock{\em J. London Math. Soc. (2).}
{\bf 26}, 1982, 465--474.

\bibitem{Bruce-Giblin} J. W. Bruce and P. J. Giblin, 
\newblock{\it Curves and singularities. A geometrical introduction to singularity theory. Second edition}. 
\newblock{Cambridge University Press}, Cambridge, 1992. 

\bibitem{doCarmo} M. P. do Carmo, 
\newblock{\it Differential geometry of curves and surfaces.} 
Translated from the Portuguese. Prentice-Hall, Inc., Englewood Cliffs, N.J., 1976. 

\bibitem{Dias-Tari} F. S. Dias and F. Tari,
\newblock{On vertices and inflections of plane curves}.
\newblock{\it J. Singul.} {\bf 17}, 2018, 70--80. 

\bibitem{Fukunaga-Takahashi-2013} T. Fukunaga and M. Takahashi,
\newblock{Existence and uniqueness for Legendre curves}.
\newblock{\it J. Geom.} {\bf 104}, 2013, 297--307.

\bibitem{Fukunaga-Takahashi-2016} T. Fukunaga, M. Takahashi, 
\newblock{On convexity of simple closed frontals}.
\newblock{\it Kodai Math. J.} {\bf 39}, 2016, 389--398. 

\bibitem{Gibson} C. G. Gibson,
\newblock{\it Elementary geometry of differentiable curves. An undergraduate introduction}.
\newblock{Cambridge University Press}, Cambridge, 2001.

\bibitem{Gray} A. Gray, E. Abbena and S. Salamon, 
\newblock{\it Modern differential geometry of curves and surfaces with Mathematica. Third edition.} 
\newblock{Studies in Advanced Mathematics. Chapman and Hall/CRC}, Boca Raton, FL, 2006.

\bibitem{Ishikawa-book} G. Ishikawa, 
\newblock{\em Singularities of Curves and Surfaces in Various Geometric Problems}.
\newblock{CAS Lecture Notes 10, Exact Sciences}, 2015.

\bibitem{Ishikawa-2018} G. Ishikawa, 
\newblock{Singularities of frontals.} 
\newblock{\em Singularities in generic geometry}, 55--106, 
{\em Adv. Stud. Pure Math.,} {\bf 78}, Math. Soc. Japan, Tokyo, 2018. 

\bibitem{Ishikawa-2020} G. Ishikawa, 
\newblock{Recognition problem of frontal singularities.} 
\newblock{\em J. Singul.} {\bf 21}, 2020, 149--166.

\bibitem{Izumiya-book} S. Izumiya, M. C. Romero-Fuster, M. A. S. Ruas and F. Tari,
\newblock{\em Differential Geometry from a Singularity Theory Viewpoint}.
\newblock{World Scientific Pub. Co Inc.} 2015.

\bibitem{Nakatsuyama-Takahashi-2024} N. Nakatsuyama and M. Takahashi, 
\newblock{On vertices of frontals in the Euclidean plane}.
\newblock{\em Bull. Braz. Math. Soc. (N.S.)}  {\bf 55}, 2024, Paper No. 35, 21 pp.

\bibitem{Salarinoghabi-Tari} M. Salarinoghabi and F. Tari,
\newblock{Flat and round singularity theory of plane curves.} 
\newblock{\em Q. J. Math.}, {\bf 68}, 2017, 1289--1312.

}
\end{thebibliography}
\end{document}